%
%
%
%

\documentclass[runningheads,a4paper]{llncs}

\usepackage{amssymb}
\setcounter{tocdepth}{3}
\usepackage{graphicx}
\usepackage{url}
\usepackage{amsmath}
\newcommand{\keywords}[1]{\par\addvspace\baselineskip
\noindent\keywordname\enspace\ignorespaces#1}

\newcommand{\off}[1]{}  
\def\p{\partial}
\def\IR{\relax{\rm I\kern-.18em R}}
\DeclareMathOperator{\Div}{div}

\urldef{\mailsa}\path| {martin.burger, lina.eckardt}@uni-muenster.de, guy.gilboa@ef.technion.ac.il,moellerm@in.tum.de|  
\urldef{\mailsb}\path| {guy.gilboa@ef.technion.ac.il|  
\begin{document}

\mainmatter

\def\SSVM15SubNumber{61}

\title{Spectral Representations \\ of One-Homogeneous Functionals}  

\titlerunning{Spectral Representations of One-Homogeneous Functionals}
\authorrunning{M. Burger, L. Eckardt, G. Gilboa, M. Moeller}
\author{Martin Burger\inst{1}, Lina Eckardt\inst{1}, Guy Gilboa\inst{2}, Michael Moeller\inst{3}}
\institute{Institute for Computational and Applied Mathematics,
University of M\"unster \and Electrical Engineering Department, Technion – IIT \and Department of Mathematics, Technische Universit\"at M\"unchen}

\maketitle

\begin{abstract}
This paper discusses a generalization of spectral representations related to convex one-homogeneous regularization functionals,  e.g. total variation or $\ell^1$-norms. Those functionals serve as a substitute for a Hilbert space structure (and the related norm) in classical linear spectral transforms, e.g. Fourier and wavelet analysis. We discuss three meaningful definitions of spectral representations by scale space and variational methods and prove that (nonlinear) eigenfunctions of the regularization functionals are indeed atoms in the spectral representation. Moreover, we verify further useful properties related to orthogonality of the decomposition and the Parseval identity.

The spectral transform is motivated by total variation and further developed to higher order variants. Moreover, we show that the approach can recover Fourier analysis as a special case using an appropriate $\ell^1$-type functional and discuss a coupled sparsity example.
\keywords{Nonlinear spectral decomposition, nonlinear eigenfunctions, total variation, convex regularization }
\end{abstract}

\section{Introduction}
Eigenfunction analysis has been used extensively to solve numerous signal processing, computer vision and machine-learning problems
such as segmentation \cite{shi_malik00}, clustering \cite{Ng_Spectral_clustering_2002} and subspace clustering
\cite{Liu_subspace_LRR_2013}, dimensionality reduction \cite{Laplacian_eigenmaps_belkin_niyogi_2003}, and more.
Recently, several attempts were made to generalize some of the properties of the linear setting to a nonlinear setting.
Specifically, singular-value analysis of convex functionals (interpreted as ``ground states") \cite{Benning_Burger_2013} and a spectral representation related
to the total variation (TV) functional \cite{Gilboa_spectv_SIAM_2014} were proposed.

We examine solutions of the following nonlinear eigenvalue problem:
\begin{equation}
\label{eq:ef_problem}
\lambda u  \in \p J(u),
\end{equation}
where $J(u)$  is a convex functional and $\p J(u)$ is its subgradient (precise definitions are given in Section \ref{sec:defs}).
We refer to functions $u$ admitting \eqref{eq:ef_problem} as eigenfunctions with corresponding eigenvalue $\lambda$.

In \cite{Gilboa_spectv_SIAM_2014} a generalization of eigenfunction analysis to the total-variation case was proposed in the following way.
Let $u(t;x)$ be the TV-flow solution \cite{tvFlowAndrea2001} or the gradient descent of the total variation energy $J_{TV}(u)$,  with initial condition $f(x)$:
\begin{equation}
\label{eq:TVflow}
	\partial_t u = - p, \qquad p \in \p J_{TV}(u), \qquad u(t=0)=f(x),
\end{equation}
where
\begin{equation}
	J_{TV}(u) = \sup_{\|\varphi\|_{L^\infty(\Omega)\le 1}} \int_\Omega u \Div \varphi dx,
\end{equation}
with $\varphi \in C^\infty_0$.
The TV spectral transform is defined by
\begin{equation}
\label{eq:phi}
\phi(t;x) := t\partial_{tt}u (t;x),
\end{equation}
where $\partial_{tt}u$ is the second time derivative of the solution $u(t;x)$ of the TV flow \eqref{eq:TVflow}.
For $f(x)$ admitting \eqref{eq:ef_problem}, with a corresponding eigenvalue $\lambda$, one obtains $\phi(t;x)=\delta(t-1/\lambda)f(x)$, where $\delta$ denotes a Dirac delta distribution.
When $f$ is composed of separable eigenfunctions with eigenvalues $\lambda_i$ one obtains through $\phi(t;x)$ a decomposition of the image into its eigenfunctions at $t=1/\lambda_i$.
In the general case, $\phi$ yields a continuum multiscale representation of the image, generalizing structure-texture decomposition
methods like \cite{Meyer[1],Luminita[2],agco06}.
One can reconstruct the original image by:
\begin{equation}
\label{eq:tv_recon}
f(x) = \int_0^\infty \phi(t;x) dt + \bar{f},
\end{equation}
where $\bar{f} = \frac{1}{\Omega}\int_\Omega f(x)dx$.
Given a transfer function $H(t)\in \IR$, image filtering can be performed by:
\begin{equation}
\label{eq:tv_filt}
f_H(x) := \int_0^\infty H(t)\phi(t;x) dt + \bar{f}.
\end{equation}

The spectrum $S(t)$ corresponds to the amplitude of each scale:
\begin{equation}
\label{eq:S}
S(t) := \|\phi(t;x)\|_{L^1(\Omega)} = \int_\Omega |\phi(t;x)|dx.
\end{equation}
\begin{figure}[htb]
\begin{center}
\begin{tabular}{ ccc }
\includegraphics[width=0.33\textwidth]{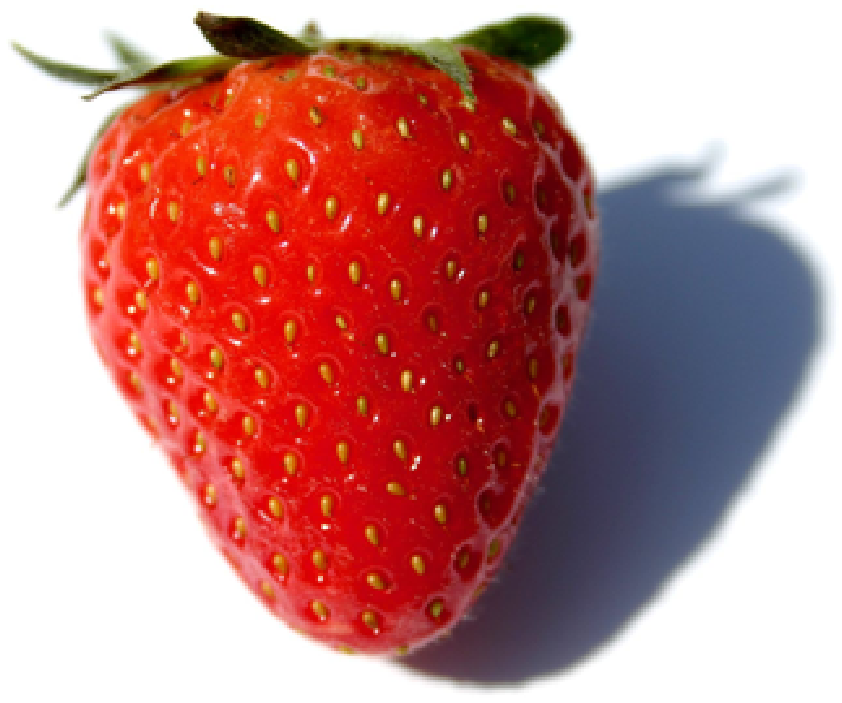}&
\includegraphics[width=0.33\textwidth]{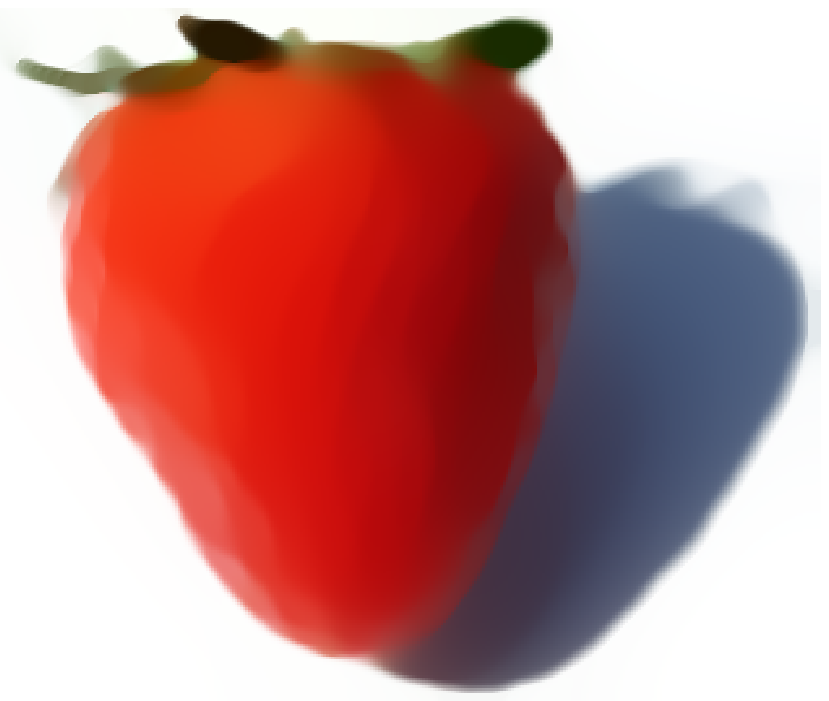}&
\includegraphics[width=0.33\textwidth]{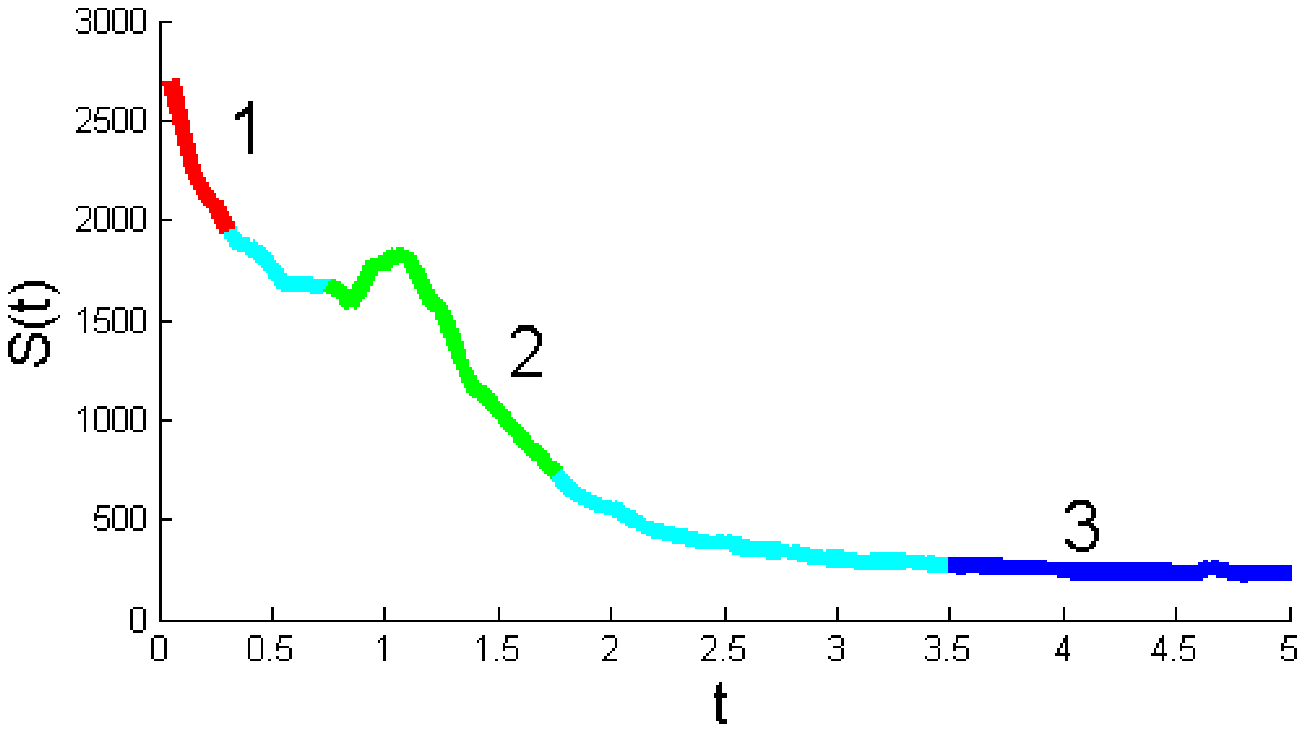}\\
Input $f$ & Low-pass & $S(t)$\\
\includegraphics[width=0.325\textwidth]{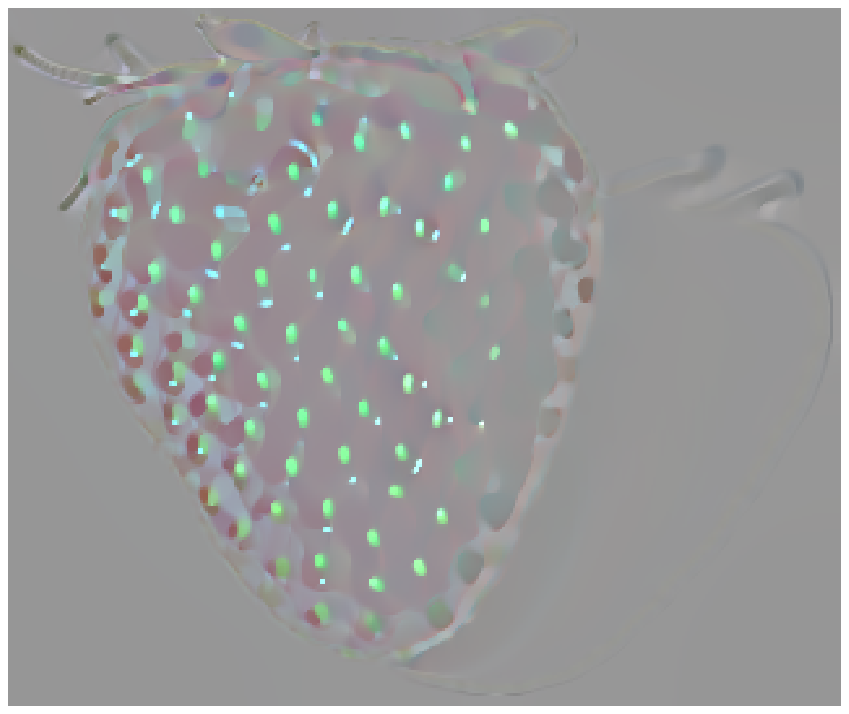}&
\includegraphics[width=0.325\textwidth]{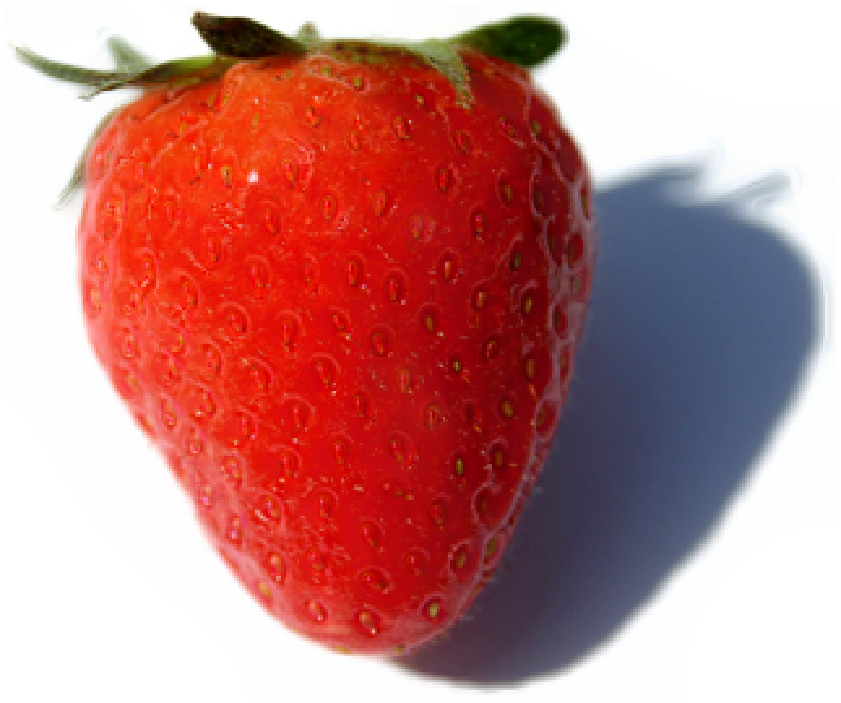}&
\includegraphics[width=0.325\textwidth]{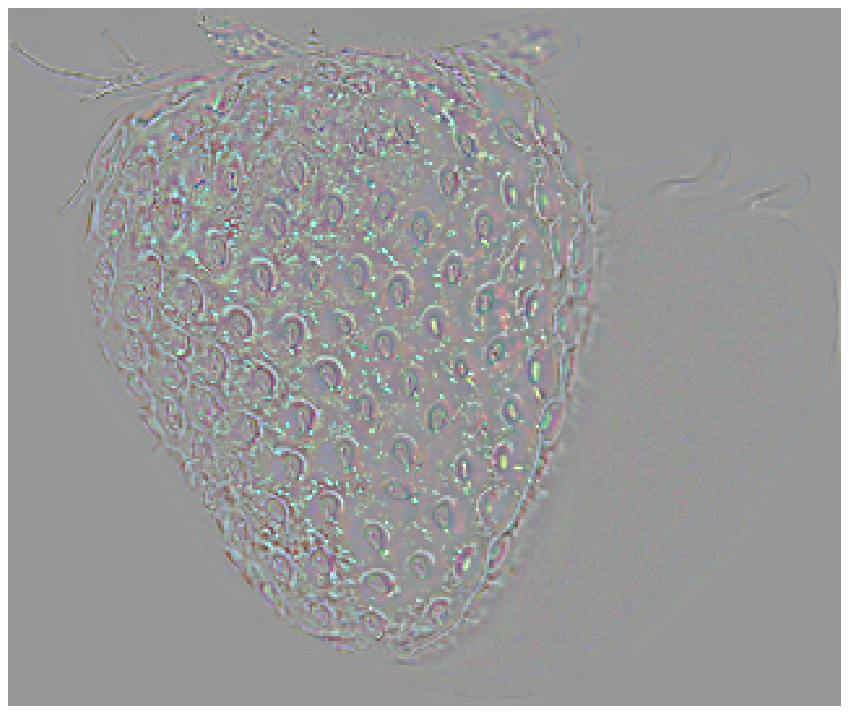}\\
Band-pass & Band-stop & High-pass
\end{tabular}
\caption{Total-variation spectral filtering example. The input image (top left) is decomposed into its $\phi(t)$ components, the corresponding spectrum
$S(t)$ is on the top right. Integration of the $\phi$'s over the $t$ domains 1, 2 and 3 (top right) yields high-pass, band-pass and low-pass filters, respectively. The band-stop filter (bottom middle) is the complement integration domain of region 2.}
\label{fig:strawberry}
\end{center}
\end{figure}
In Fig. \ref{fig:strawberry} an example of spectral TV processing is shown. For example, an (ideal) high-pass filter is defined by $H(t)=1$ for a range
$t \in [0,t_c]$ and 0 otherwise, the filter response is calculated using Eq. \eqref{eq:tv_filt}. Similarly for band-pass and low-pass filters in different time domains (see precise definitions in \cite{Gilboa_spectv_SIAM_2014}).

The analysis of eigenfunctions related to non-quadratic smoothing convex functionals has mainly focused on the TV functional.
An extensive study was conducted in the past decade regarding the TV-flow, its properties like finite extinction time and some analytic solutions
in the case of eigenfunctions, see \cite{tvFlowAndrea2001,andreu2002some,bellettini2002total,Steidl,burger2007inverse_tvflow,discrete_tvflow_2012,tvf_giga2010}.
In \cite{Muller_thesis,Benning_highOrderTV_2013} eigenfunctions related to the total-generalized-variation (TGV) functional \cite{bredies_tgv_2010} are analyzed.

\section{Spectral Representations}
\label{sec:defs}

In the following we generalize the total variation spectral approach in  \cite{Gilboa_SSVM_2013_SpecTV,Gilboa_spectv_SIAM_2014} in two ways. First of all we now consider arbitrary one-homogeneous convex functionals $J: {\cal X} \rightarrow \mathbb{R}^+ \cup \{\infty\}$ defined on Banach space ${\cal X}$ embedded into $L^2(\Omega)$. We assume that the properties of $J$ are such that all differential equations and variational problems considered in the following are well-posed, which can be verified for standard examples such as total variation. Secondly we also consider alternative definitions of the spectral representation than the TV flow used in \cite{Gilboa_SSVM_2013_SpecTV,Gilboa_spectv_SIAM_2014}. In particular we discuss the representations introduced by a standard variational approach and by the Bregman iterations respectively inverse scale space methods (cf. \cite{rof92,digital_tv,iss}). The latter has been investigated in the case of total variation in \cite{Eckardt_Burger_Bach_thesis_2014}. Finally we shall also discuss the issue of the spectral response.

We start with a given image $f \in L^2(\Omega)$ and consider three different versions of the spectral transform.
The first one is the scale space from a gradient flow approach resembling the TV flow, the last one is obtained from the inverse scale space method. In between these methods there is the classical variational regularization.
A potentially confusing issue is the fact that time in the inverse scale space method rather corresponds to an inverse of the time variable in the other models. For this sake we will make change of time variables in the end to ease comparison. Moreover, in addition to the spectral representation $\phi$  we introduce another one called $\psi$ in the inverse time variable. Note that for small times $\phi$ measures changes in high frequencies and $\psi$ measures changes in low frequencies. In analogy to classical signal processing we call $\phi$ {\em wavelength representation} and $\psi$ {\em frequency representation}, respectively. Noticing that a change of variables $s=\frac{1}t$ yields
\begin{equation}
	\int_0^\infty \phi(t) w(t)~dt = \int_0^\infty \phi(\frac{1}s) w(\frac{1}s)\frac{1}{s^2}~ds,
\end{equation}
which motivates the consistency condition
\begin{equation}
	\psi(s) = \phi(\frac{1}s)\frac{1}{s^2} \quad \mbox{respectively} \quad \phi(t) = \psi(\frac{1}t)\frac{1}{t^2},
\end{equation}
that ensures the desirable inverse time relation between frequency and wavelength representation,
 $	\int_0^\infty \phi(t) w(t)~dt = \int_0^\infty  \psi(s) w(\frac{1}s) ~ds.$

For simplicity we assume $J(u) > 0 $ for $u \in X \setminus \{0\}$, which is usually achieved by choosing $X$ restricted in the right way (note that the null-space of a convex one-homogeneous functional is a linear subspace of $X$, \cite{Benning_Burger_2013}). E.g. in the case of total variation regularization we would consider the subspace of functions with vanishing mean value. The general case can be reconstructed by adding appropriate nullspace components.
The detailed definitions of the spectral representations are given as follows:
\begin{itemize}

\item[] {\bf Gradient Flow Representation: } Let $u_{GF}(t)$ be the solution of
\begin{equation}
\label{eq:gradientFlow}
	\partial_t u = - p_{GF}, \qquad p_{GF} \in \partial J(u)
\end{equation}
for $t > 0$ with initial value $u(0)=f$. Then the corresponding wavelength spectral transform is defined by
\begin{equation}
	\phi_{GF}(t) = t \partial_{tt} u_{GF}(t) = - t \partial_t p_{GF}(t).
\end{equation}
We obtain the frequency representation as
\begin{equation}
	\psi_{GF}(s) = \frac{1}{s^3} \partial_{tt} u_{GF}(\frac{1}s) = - \frac{1}{s^3} \partial_t p_{GF}(\frac{1}s) = \frac{1}{s} \partial_s q_{GF}(s),
\end{equation}
where $q_{GF}(s)=p_{GF}(\frac{1}s).$


\vspace*{6pt}

\item[] {\bf Variational Representation: } Let $u_{VM}(t)$ be the minimizer of
\begin{equation}
	\frac{1}2 \Vert u - f \Vert_2^2 + t J(u) \rightarrow \min_{u \in {\cal X}}, \ \ \text{i.e.} \ \ u_{VM}(t) = f - t p_{VM}(t), \ p_{VM} \in \partial J(u_{VM}) .
\end{equation}
Then the corresponding high frequency spectral representation is defined by
\begin{equation}
	\phi_{VM}(t) = t \partial_{tt} u_{VM}(t) = - \partial_t (t^2 \partial_t p_{VM}(t)).
\end{equation}
The low frequency representation can be derived from the equivalent form of the variational problem
\begin{equation}
	\frac{s}2 \Vert v - f \Vert_2^2 + J(v) \rightarrow \min_{v \in {\cal X}},
\end{equation}
with the relation $v_{VM}(s) = u_{VM}(\frac{1}s)$, which yields
\begin{equation}
	\psi_{VM}(s) = \frac{1}s \partial_{tt} u_{VM}(\frac{1}s) = s \partial_s (s^2 \partial_s v_{VM}(s)).
\end{equation}

\vspace*{6pt}

\item[] {\bf Inverse Scale Space Representation: }
Let $v_{IS}(s)$ be the solution of
\begin{equation}
	\partial_s q_{IS} = f- v, \qquad q_{IS} \in \partial J(v)
\end{equation}
for $s > 0$ with initial value $v(0)=0$. Then the corresponding low frequency spectral representation is defined by
\begin{equation}
	\psi_{IS}(s) = \partial_{s} v_{IS}(s) = - \partial_{ss} q_{IS}(s).
\end{equation}
With $u_{IS}(t) = v_{IS}(\frac{1}t)$ we obtain
\begin{equation}
\label{eq:issPhi}
	\phi_{IS}(s) = - t^2 \partial_{t} u_{IS}(t) .
\end{equation}

\end{itemize}

Note that due to low regularity of $J$ we expect $\phi_{*}$ to be a measure in time (here $*$ stands for either GF, VM, or IS). This is seen immediately from the canonical example of $f$ being a (nonlinear) eigenfunction of $J$:

\begin{theorem}
Let $f \in X$ satisfy (\ref{eq:ef_problem}) for some $\lambda > 0$ and $u=f$. Then, with the definitions made in \eqref{eq:gradientFlow} -- \eqref{eq:issPhi} we have
\begin{equation}
	\phi_*(t) = f \delta_{\frac{1}\lambda}(t), \qquad \psi_*(s) = f \delta_{\lambda}(s)\qquad \mbox{for~}*=\mbox{GF,VM,IS}.
\end{equation}
\end{theorem}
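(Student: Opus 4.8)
The plan is to treat each of the three representations separately, in each case computing the flow/minimizer explicitly under the eigenfunction hypothesis $\lambda f \in \partial J(f)$ and then differentiating. The key elementary fact I would use repeatedly is that if $u = c f$ for a scalar $c > 0$, then by one-homogeneity of $J$ we have $\partial J(cf) = \partial J(f)$, so $\lambda f \in \partial J(cf)$ as well; hence along any trajectory that stays on the ray $\{cf : c>0\}$, a valid subgradient is always $p = \lambda f$ (up to rescaling by the homogeneity, i.e. $p = \lambda f$ works whenever $u$ is a nonnegative multiple of $f$). This reduces each PDE/variational problem to a scalar ODE/equation in the coefficient $c(t)$.

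\textbf{Gradient flow.} With the ansatz $u_{GF}(t) = c(t) f$ and $p_{GF} = \lambda f$, equation \eqref{eq:gradientFlow} becomes $c'(t) f = -\lambda f$ with $c(0)=1$, so $c(t) = 1 - \lambda t$; this is valid on $[0,1/\lambda)$, and at $t = 1/\lambda$ the solution reaches $0$ and stays there (one checks $0 \in \partial J(0)$, or uses $J(u)>0$ for $u\neq 0$). Thus $u_{GF}(t) = (1-\lambda t)_+ f$, whose first derivative is $-\lambda f$ on $(0,1/\lambda)$ and $0$ afterwards — a jump of size $\lambda f$ at $t = 1/\lambda$. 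Therefore $\partial_{tt} u_{GF} = \lambda f\,\delta_{1/\lambda}(t)$ in the distributional sense, and $\phi_{GF}(t) = t\,\partial_{tt} u_{GF}(t) = \tfrac{1}{\lambda}\cdot \lambda f\,\delta_{1/\lambda}(t) = f\,\delta_{1/\lambda}(t)$, using that $\delta_{1/\lambda}$ is supported at $t = 1/\lambda$. The frequency representation then follows from the consistency relation $\psi(s) = \phi(1/s)/s^2$ together with the scaling rule $\delta_{1/\lambda}(1/s) = \lambda^2\,\delta_\lambda(s)$ for Dirac masses under $t \mapsto 1/t$; I would either invoke this change-of-variables identity directly or verify it by testing against a smooth $w$.

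\textbf{Variational and inverse scale space.} For the variational problem, the optimality condition $u_{VM}(t) = f - t\,p_{VM}(t)$ with the ansatz $p_{VM} = \lambda f$ gives $u_{VM}(t) = (1 - \lambda t) f$ for $t < 1/\lambda$; for $t \geq 1/\lambda$ one must instead take $u_{VM}(t) = 0$ with the subgradient $p_{VM}(t) = f/t \in \partial J(0)$, which is admissible since $\partial J(0)$ is the (closed convex) set $\{p : \langle p, w\rangle \leq J(w)\ \forall w\}$ and $\lambda f$ lies in it, hence so does $f/t = (\lambda/( \lambda t)) f$ for $\lambda t \geq 1$ — here I should double-check that $\partial J(0)$ is a cone-like set or at least that scaling $\lambda f$ down by a factor $\leq 1$ keeps it inside, which holds because $0 \in \partial J(0)$ and the set is convex. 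So again $u_{VM}(t) = (1-\lambda t)_+ f$, exactly the same profile as the gradient flow, and the same computation yields $\phi_{VM} = f\,\delta_{1/\lambda}$, $\psi_{VM} = f\,\delta_\lambda$. For the inverse scale space flow, I would use the ansatz $v_{IS}(s) = c(s) f$ with $q_{IS} = \lambda f$ for $s$ large and $q_{IS} = s f$ (still in $\partial J(0)$ while $s \leq 1/\lambda$) at the start: the equation $\partial_s q_{IS} = f - v$ forces $v = 0$ as long as $q_{IS} = s f$ stays a subgradient of $0$, i.e. for $s \in [0, 1/\lambda]$, and then at $s = 1/\lambda$ it jumps onto $v = f$ (where $q_{IS} = \lambda f$ is stationary and $f - v = 0$). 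Hence $v_{IS}(s) = f\,\mathbf{1}_{\{s > 1/\lambda\}}$... but wait — this gives a jump at $s = 1/\lambda$, not at $s=\lambda$; I need to recheck the scaling. Re-examining: the ISS equation has $v(0)=0$ and the solution should reach $f$ when the "inverse time" $s$ matches the eigenvalue. Since $q_{IS}=sf$ is a subgradient of $0$ precisely while $s \leq \lambda$ (as $\partial J(0) \ni \mu f$ iff $\mu \leq \lambda$, by maximality of the eigenfunction's eigenvalue — this is a point I must pin down), the jump occurs at $s = \lambda$, giving $v_{IS}(s) = f\,\mathbf{1}_{\{s>\lambda\}}$ and $\psi_{IS}(s) = \partial_s v_{IS}(s) = f\,\delta_\lambda(s)$. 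Then $\phi_{IS}(t) = -t^2 \partial_t u_{IS}(t)$ with $u_{IS}(t) = v_{IS}(1/t) = f\,\mathbf{1}_{\{1/t > \lambda\}} = f\,\mathbf{1}_{\{t < 1/\lambda\}}$, so $\partial_t u_{IS} = -f\,\delta_{1/\lambda}(t)$ and $\phi_{IS}(t) = t^2 f\,\delta_{1/\lambda}(t) = \tfrac{1}{\lambda^2} f\,\delta_{1/\lambda}(t)$ — which is off by $1/\lambda^2$ from the claim, suggesting I should instead define things so the consistency relation is respected, i.e. read $\psi_{IS}(s) = f\,\delta_\lambda(s)$ as primary and obtain $\phi_{IS}$ from it via $\phi(t) = \psi(1/t)/t^2$, consistent with the sign/scaling conventions fixed in the text.

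\textbf{Main obstacle.} The routine part is the scalar ODE on the ray through $f$; the genuinely delicate points are (i) justifying that the solution is exactly the truncated ray $(1-\lambda t)_+ f$ — in particular that once the flow hits $0$ it stays there, which relies on $0 \in \partial J(0)$ and, for the variational and ISS cases, on a careful characterization of $\partial J(0)$ and of which multiples $\mu f$ lie in it (I expect $\mu f \in \partial J(0) \iff 0 \le \mu \le \lambda$, using that $\lambda$ is the eigenvalue, i.e. $\langle \lambda f, f\rangle = J(f)$ forces equality in the subgradient inequality); and (ii) getting the Dirac scalings and the factors of $t$, $t^2$, $1/s$, $1/s^3$ to line up correctly under the time inversion $s = 1/t$, which is purely bookkeeping but error-prone, as my scratch computation above shows. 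I would organize the write-up so that $\phi_{GF}$ is computed from scratch, and then $\psi_{GF}$, and both IS and VM representations, are obtained by invoking the already-established consistency identities together with the observation that $u_{GF}$, $u_{VM}$, and $u_{IS}$ (in the appropriate time variable) all coincide with $(1-\lambda t)_+ f$.
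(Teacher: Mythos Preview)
Your approach is exactly the paper's: verify $u_{GF}=u_{VM}=(1-\lambda t)_+ f$ and $v_{IS}(s)=f\,\mathbf{1}_{\{s>\lambda\}}$, then differentiate distributionally; the paper simply asserts these formulas as ``straightforward to check'' (citing \cite{Benning_Burger_2013} for the ISS case), while you supply the ray ansatz, the scalar ODE, and the subgradient verifications at zero. The $1/\lambda^2$ discrepancy you flagged in $\phi_{IS}$ is not your error but a typo in the paper's formula~\eqref{eq:issPhi}: combining the consistency relation $\phi(t)=\psi(1/t)/t^2$ with $\psi_{IS}(s)=\partial_s v_{IS}(s)$ and the chain rule for $u_{IS}(t)=v_{IS}(1/t)$ gives $\phi_{IS}(t)=-\partial_t u_{IS}(t)$, not $-t^2\partial_t u_{IS}(t)$, and with that correction your direct computation lands on $f\,\delta_{1/\lambda}(t)$ as claimed.
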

\begin{proof}
It is straightforward to check that $u_{GF}=u_{VM}=(1-\lambda t)_+ f$ and the a calculation of the second derivative in a distributional sense yields the corresponding $\phi$. In a similar way one can check in the case of the inverse scale space method that $u_{IS}(s) = 0$ for $s < \lambda$ and $u_{IS}(s)=f$ for $s > \lambda$ is the solution (with piecewise linear $p$), see e.g. \cite{Benning_Burger_2013}.
\end{proof}

The above result confirms our intuition about the spectral decompositions, indeed the eigenfunctions give a pure spectrum and the position of certain wavelengths  respectively frequencies in the spectral domain is proportional respectively inversely proportional to the eigenvalue.

As in (\ref{eq:tv_recon}) we can reconstruct the signal from the spectral response, noticing again that we have no nullspace components:
\begin{theorem}
Let $u_*$ be such that $u_*(0)=f$ and $u_*(t) \rightarrow 0$ sufficiently fast for $t \rightarrow \infty$, for $*=$GF,VM,IS.
Then
\begin{equation}
	f = \int_0^\infty \phi_*(t)~dt = \int_0^\infty \psi_*(s)~ds \qquad \mbox{for~}*=\mbox{GF,VM,IS}.
\end{equation}
\end{theorem}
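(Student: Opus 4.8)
The plan is to establish the reconstruction formula separately for each of the three representations, in each case exploiting the fact that the spectral transform $\phi_*$ is essentially a second derivative in time (or a first derivative, in disguised form) of the underlying flow/minimizer, so that the integral telescopes via the fundamental theorem of calculus. I would set up the common template first: in all three cases we want to show $\int_0^\infty \phi_*(t)\,dt = f$, and by the change of variables $s = 1/t$ together with the consistency condition $\psi_*(s) = \phi_*(1/s)/s^2$ the identity $\int_0^\infty \psi_*(s)\,ds = \int_0^\infty \phi_*(t)\,dt$ is automatic, so only the $\phi$-statement needs a genuine argument.

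For the gradient flow representation, $\phi_{GF}(t) = t\,\partial_{tt} u_{GF}(t)$, so integration by parts gives $\int_0^\infty t\,\partial_{tt} u_{GF}\,dt = [t\,\partial_t u_{GF}]_0^\infty - \int_0^\infty \partial_t u_{GF}\,dt = [t\,\partial_t u_{GF} - u_{GF}]_0^\infty$. Using $\partial_t u_{GF} = -p_{GF}$ with $p_{GF}$ bounded (it lies in $\partial J$, which is bounded for one-homogeneous $J$), the term $t\,\partial_t u_{GF}$ needs to be controlled as $t\to 0$ and $t\to\infty$; the decay hypothesis $u_*(t)\to 0$ "sufficiently fast" is exactly what kills the boundary terms at infinity, and at $t=0$ we get $u_{GF}(0)=f$ with the $t\,\partial_t u_{GF}$ term vanishing, yielding $\int_0^\infty \phi_{GF}\,dt = f$. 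For the variational representation, $\phi_{VM}(t) = -\partial_t(t^2\partial_t p_{VM}(t))$ and $u_{VM} = f - t\,p_{VM}$, so $\partial_t u_{VM} = -p_{VM} - t\,\partial_t p_{VM}$; one checks that $\phi_{VM}(t) = t\,\partial_{tt} u_{VM}(t)$ as stated and then runs the identical integration-by-parts argument, using $u_{VM}(0) = f$ (the minimizer at $t=0$ is $f$ itself) and the decay at infinity. For the inverse scale space case it is cleanest to argue directly in the $s$ variable: $\psi_{IS}(s) = \partial_s v_{IS}(s)$, so $\int_0^\infty \psi_{IS}(s)\,ds = [v_{IS}(s)]_0^\infty = v_{IS}(\infty) - v_{IS}(0) = f - 0 = f$, using that $v_{IS}(0)=0$ and that the inverse scale space flow converges to $f$ as $s\to\infty$.

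The main obstacle is rigor at the endpoints: the spectral transforms are only measures in time (as the eigenfunction theorem already shows, $\phi_*$ can be a Dirac mass), so the "integration by parts" must be interpreted distributionally, and the boundary terms at $t=0$ and $t=\infty$ require the regularity and decay assumptions on $u_*$ that the paper has deliberately left unquantified ("sufficiently fast"). I would therefore phrase the proof as: assume enough regularity that $\partial_t u_*$ has a trace at $t=0^+$ and that $t\,\partial_t u_*(t)\to 0$ as $t\to\infty$ (which follows from finite extinction plus boundedness of $\partial J$ in the TV case), then the telescoping computation is valid; I would note the eigenfunction example from the previous theorem as a consistency check, since there $\int_0^\infty f\,\delta_{1/\lambda}(t)\,dt = f$ trivially. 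The honest statement is that the theorem holds "formally" or under the implicit well-posedness and decay assumptions announced at the start of Section~\ref{sec:defs}, and the proof is a one-line fundamental-theorem-of-calculus argument in each of the three cases once those assumptions are granted.
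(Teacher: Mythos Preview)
Your proposal is correct and follows essentially the same approach as the paper: use the consistency relation between $\phi_*$ and $\psi_*$ to reduce to one integral, then apply integration by parts (or the fundamental theorem of calculus directly, in the IS case) together with the initial condition $u_*(0)=f$ and the assumed decay at infinity. Your treatment is simply more explicit than the paper's, which compresses all three cases into the single remark that ``for the appropriate choice the result follows simply by integration by parts.''
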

\begin{proof}
Due to the consistency relation it suffices to verify the reconstruction formula only for $\phi$ or $\psi$.
For the appropriate choice the result follows simply by integration by parts, noticing $u_*(t=0) = f$ and all terms at infinity vanish due to the decay of $u$.
\end{proof}

For the sake of brevity we shall restrict ourselves to the gradient flow case in the following arguments.
An interesting property concerns the orthogonality of remaining signal and the spectral transform. In a classical Fourier series we have a decomposition into orthogonal components, so there is natural orthogonality between the spectral part at a certain frequency and the remaining signal at lower or higher frequency (the sum of orthogonal components with higher or lower indices). An analogous property holds for our spectral decomposition, i.e. we expect $u(t)$ to be orthogonal to $\phi(t)$. This can be seen from a formal computation of $\frac{d}{dt} (J(u(t)))$ in two ways. First of all we have
\begin{equation}
 \frac{d}{dt} (J(u(t))) = \langle p(t), \partial_t u(t) \rangle = - \Vert p(t) \Vert^2. \nonumber
 \end{equation}
On the other, since for one-homogeneous convex functionals $J(u(t))=\langle p(t), u(t) \rangle$ holds, we find
\begin{equation}
 \frac{d}{dt} (J(u(t))) = \frac{d}{dt} (\langle p(t), u(t) \rangle) = \langle \partial_t p(t), u(t) \rangle + \langle p(t), \partial_t u(t) \rangle = - \frac{1}t \langle \phi(t), u(t) \rangle - \Vert p(t) \Vert^2. \nonumber
\end{equation}
Hence, comparing the terms we obtain the orthogonality relation.

Finally let us discuss the spectral response $S$, which was defined before as the $L^1$-norm of $\phi$ in the TV case as mentioned above. This choice is somewhat arbitrary and in particular difficult to generalize to other functionals, so we need to derive a different version of the spectral response that can be expressed solely in terms of the functional $J$, the $L^2$-norm of $f$, and the method used to derive $\phi$. Let us start with the gradient flow, for which it is natural to investigate the energy dissipation, i.e., we compute time derivatives of $J$. By the chain rule we (formally) find
\begin{equation}
 \frac{d}{dt} J(u(t)) = \langle p(t) , \partial_t u(t) \rangle = - \Vert p(t) \Vert^2. \nonumber
\end{equation}
Moreover, in the case of $J$ differentiable, i.e. $p(t)=J'(u(t))$, we have
\begin{equation}
 \frac{d^2}{dt^2} J(u(t)) =  -2 \langle \partial_t p(t), p(t) \rangle
= - 2 \langle J''(u(t)) \partial_t u(t), p(t) \rangle = 2 \langle J''(u(t)) p(t), p(t) \rangle, \nonumber
\end{equation}
which is a nonnegative quantity, since $J$ is convex. For the nonsmooth case we might encounter even more interesting spectral responses this way, the nonexistence of a classical second derivative e.g. allows to have concentrated parts. This can be made precise again in the case of $f$ being an eigenfunction for the eigenvalue $\lambda$, i.e., (\ref{eq:ef_problem}) holds for $u=f$. Then we know that $p(t)=\lambda f$ for $t< \frac{1}\lambda$ and $p(t)=0$ for larger times. Computing $\Vert p(t) \Vert$ and its derivative we immediately find
\begin{equation}
	\frac{d^2}{dt^2} J(u(t))  = \lambda^2 \Vert f \Vert^2 \delta_{\frac{1}\lambda}(t) = \frac{1}{t^2} \Vert f \Vert^2 \delta_{\frac{1}\lambda}(t) .
\end{equation}
Since apparently $\Vert f \Vert$ is a suitable value for the magnitude, we define the spectral response as
\begin{equation} \label{eq:newSdefinition}
	S(t) = t \sqrt{ \frac{d^2}{dt^2}J(u(t)) } = \sqrt{\langle \phi(t), 2 t p(t)} \rangle.
\end{equation}
With this definition we have the following analogue of the Parseval identity, which follows again using integration by parts and sufficient decay of $u$ and its derivatives:
\begin{eqnarray}
	\Vert f \Vert^2 & = & - \int_0^\infty \frac{d}{dt} \Vert u(t) \Vert^2 ~dt = 2 \int_0^\infty \langle p(t), u(t) \rangle ~dt =
	 2 \int_0^\infty J(u(t))~dt \nonumber \\ & = & \int_0^\infty S(t)^2  ~dt,	\end{eqnarray}
which confirms that the spectral representation encodes the full norm of $f$.

\vspace*{-12pt}

\begin{figure}[htb]
\begin{center}
\begin{tabular}{ ccc  }
\includegraphics[width=0.2\textwidth]{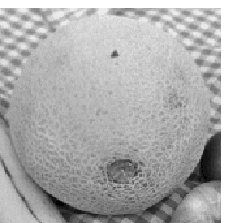}&
\includegraphics[width=0.3\textwidth]{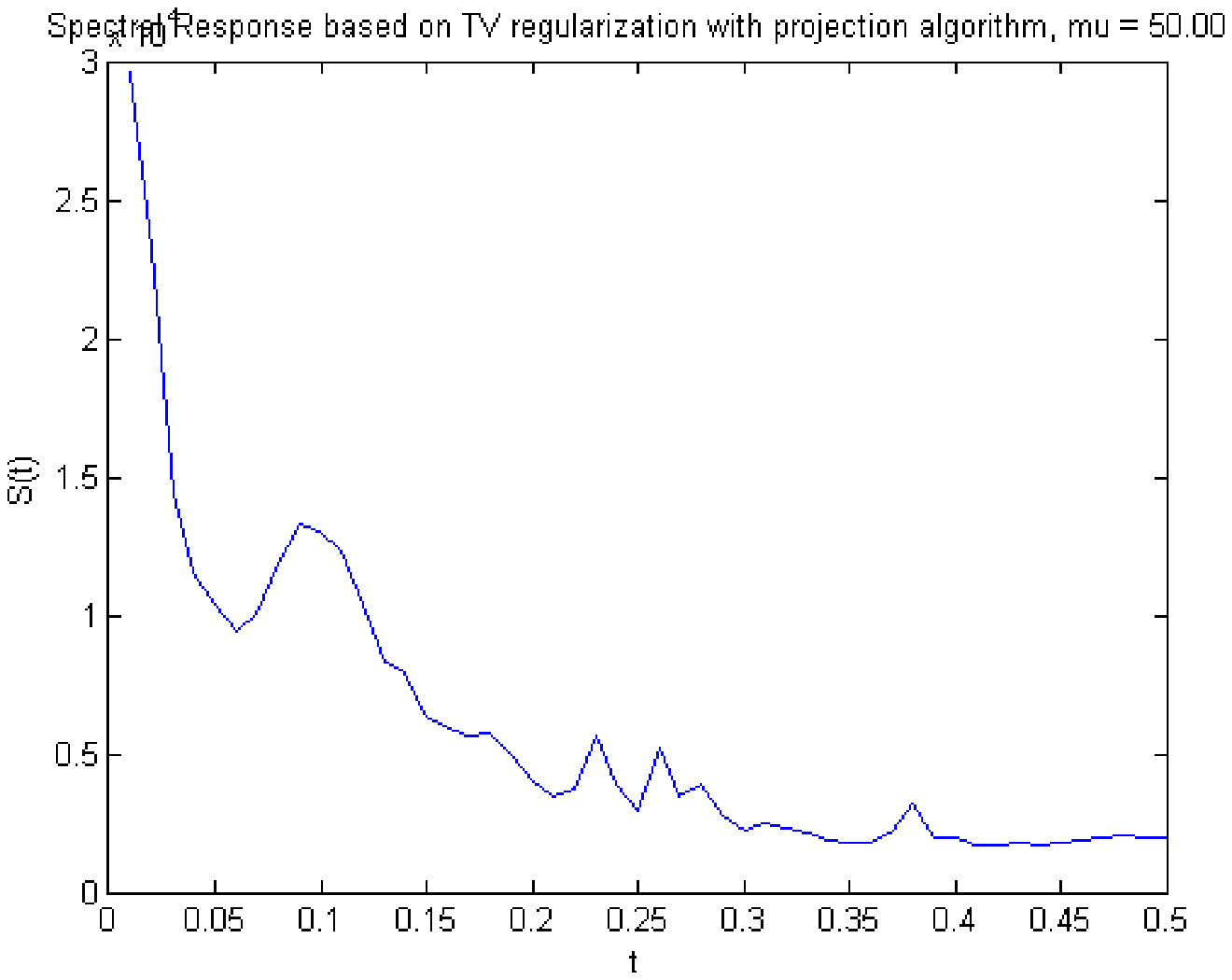}&
\includegraphics[width=0.3\textwidth]{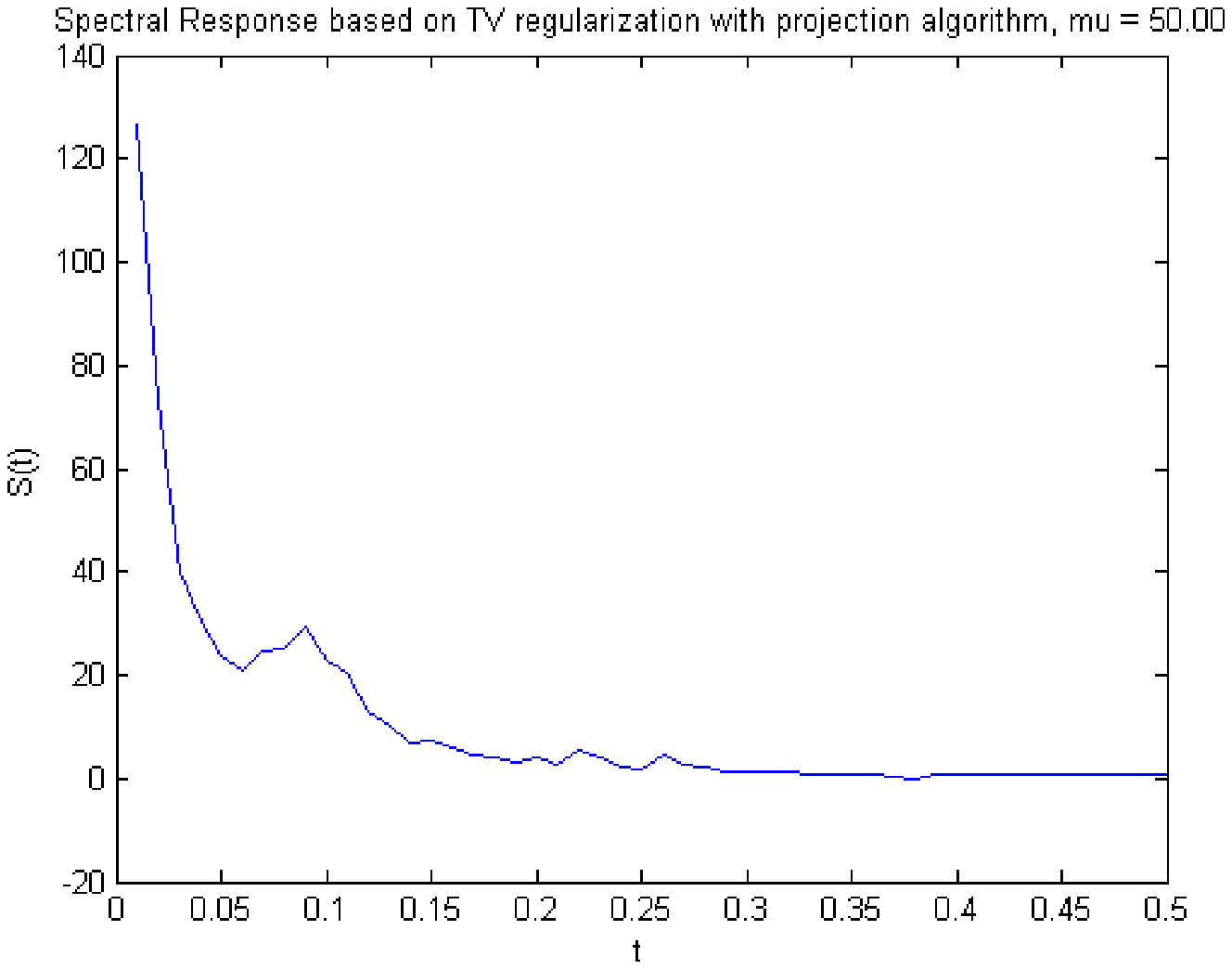} \\
Melon image & $\Vert \phi(t) \Vert_{L^1}$ & $   t^2 \frac{d^2}{dt^2}J(u(t)) $
\end{tabular}
\caption{Spectral resonse for the total variation flow in an image of a melon (left). The original definition from \cite{Gilboa_SSVM_2013_SpecTV} in the middle and the definition (\ref{eq:newSdefinition}) on the right.}
\label{fig:spectralresponse}
\end{center}
\vspace{-0.5cm}
\end{figure}

\vspace*{-12pt}
In computational experiments, the behaviour $S$ as defined by (\ref{eq:newSdefinition}) appears to be more suitable for the spectral decomposition, in particular there are the important maxima marking essential changes in the spectrum but less further oscillations. This is illustrated for total variation flow on an image already considered in \cite{Gilboa_SSVM_2013_SpecTV} in Figure \ref{fig:spectralresponse}. We finally mentioned that a generalization of this definition to variational methods and in particular inverse scale space methods is a nontrivial task and beyond the scope of this paper.

\section{Examples}

In the following we discuss three interesting examples of one-homogeneous functionals beyond total variation, which all provide interesting spectral definitions in a different context.
\subsection{Recovering Fourier Analysis}
 One particular example for a one-homogeneous regularization functional $J$ which restores a very classical method is $J(u) = \|Vu\|_1$ for an orthonormal linear transform $V$, e.g. for $V$ corresponding to frequency analyzing transforms like the Fourier or cosine transforms. A similar computation has previously been done by Xu and Osher in \cite{XuOsher2007} in which they analyzed Bregman iteration as well as the inverse scale space flow for $V$ corresponding to a wavelet transformation.

For the sake of simplicity let us consider the discrete problem. For any orthonormal $V$ we obtain
\begin{eqnarray}
\min_u ~ \frac{1}{2} \|u-f\|_2^2 + t \|Vu\|_1, &=& \min_z ~\frac{1}{2}\|V^Tz-f\|_2^2 + t \|z\|_1 ,\nonumber\\ 
&=& \min_z ~ \frac{1}{2}\|z-Vf\|_2^2 + t \|z\|_1 ,\nonumber
\end{eqnarray}
where we have used the orthonormality of $V$ along with the fact that the $\ell^2$ norm is invariant with respect to multiplication with orthonormal matrices. Note that $Vf$ becomes the frequency transform of the input data to which we apply $\ell^1$ regularization. The above minimization problem in $z$ admits a closed form solution known as soft-shrinkage, i.e.
\begin{equation}
z_{VM}(t) = \arg \min_z  \frac{1}{2}\|z-Vf\|_2^2 + t \|z\|_1  = \textnormal{sign}(Vf) \; \max (|Vf|-t,0),
\end{equation}
and hence $u_{VM}(t) = V^Tz_{VM}(t)$. The first time derivative of $z_{VM}(t)$ is
\begin{equation}
\partial_t \left(z_{VM}(t)\right)_i = \left \{ \begin{array}{cc} 0 & \textnormal{ if } |(Vf)_i|<t, \\ -\textnormal{sign}((Vf)_i)& \textnormal{ else. } \end{array} \right. .
\end{equation}
Interestingly, we can see that this means that $\partial_t z_{VM}(t)$ is in its own negative subdifferential, i.e.
\begin{equation}
\partial_t z_{VM}(t) \in - \partial \|z_{VM}\|_1,
\end{equation}
from which we can conclude that $z_{VM}(t)=z_{GF}(t)$ and $u_{VM}(t)=u_{GF}(t)$. The second time derivative of $z_{*}(t)$ (for $* \in \{VM, GF \}$) becomes
\begin{equation}
\partial_{tt} \left(z_*(t)\right)_i =  \textnormal{sign}((Vf)_i) ~ \delta_{t = |(Vf)_i|}. \nonumber
\end{equation}
The wavelength transform function $\phi^z_{*}(t) = t \partial_{tt} z_{*}$ is therefore given as
\begin{equation}
\left(\phi^z_*(t)\right)_i= \delta_{t = |(Vf)_i|}~ (Vf)_i. \nonumber
\end{equation}
As we can see the spectral transform $\phi^z_*$ simply reduces to the spectrum of the frequency coefficients $Vf$ in this case. Moreover, the first possible definition of the spectrum,
\begin{equation}
 S^z_*(t) = \|\phi^z_*(t)\|_1 = |\{i ~|~ |(Vf)_i|=t\}| ~ t, \nonumber
 \end{equation}
reduces to the sum over the absolute values of the frequencies that have magnitude $t$, when considered with respect to the $z$ variable. With respect to the variable $u$ we obtain
 $S^u_*(t) = t \|V^T e_{|(Vf)_i|=t}\|_1,$
where $ e_{|(Vf)_i|=t}$ denotes the vector with the $i$th entry being one if $|(Vf)_i|=t$ and zero else. Since for random DCT coefficients of $f$ the vector $e_{|(Vf)_i|=t}$ is at most one-sparse with probability one, the peaks of the spectrum have magnitude $|(Vf)_i| \|v_i\|_1$ where $v_i$ is the $i$th row of $V$. Since $\|v_i\|_1$ is not the same for all $i$ and also does not have a specific interpretation, definition \eqref{eq:newSdefinition}, i.e.
\begin{equation}
	S(t) = t \sqrt{ \frac{d^2}{dt^2}J(u(t))} = t \sqrt{ \frac{d^2}{dt^2} \|Vu(t)\|_1 } = t \sqrt{ \frac{d^2}{dt^2} \|z(t)\|_1}
\end{equation}
is preferable since it does not depend on the $u$ or $z$ representation. Since the first time derivative of $ \|z(t)\|_1 $ is just the number of nonzero entries of $z$, we obtain $S(t) = \sqrt{|\{i ~|~ |(Vf)_i|=t\}|} ~ t$.

Finally, let us consider the third possible definition of a spectral definition, i.e. the inverse scale space flow. By following \cite{XuOsher2007}, we obtain
\begin{equation}
z_{IS}(s) = \left \{ \begin{array}{cc} 0 & \textnormal{ if } s|(Vf)_i|< 1, \\ -\textnormal{sign}((Vf)_i)& \textnormal{ else. } \end{array} \right. ,
\end{equation}
such that for $t = 1/s$ we obtain $z_{IS}(t)=z_{VM}(t)=z_{GF}(t)$ and all three definitions of wavelength and frequency representations coincide.

Figure \ref{fig:dctExample} shows an example of the above setup using the discrete cosine transform (DCT) for $V$. As we can see, the low pass reconstructions are obtained as hard thresholdings of the coefficients, which is what we refer to as an ideal low pass filter.

\vspace*{-12pt}
\begin{figure}[bth]
\begin{tabular}{ ccc }
\includegraphics[width=0.325\textwidth]{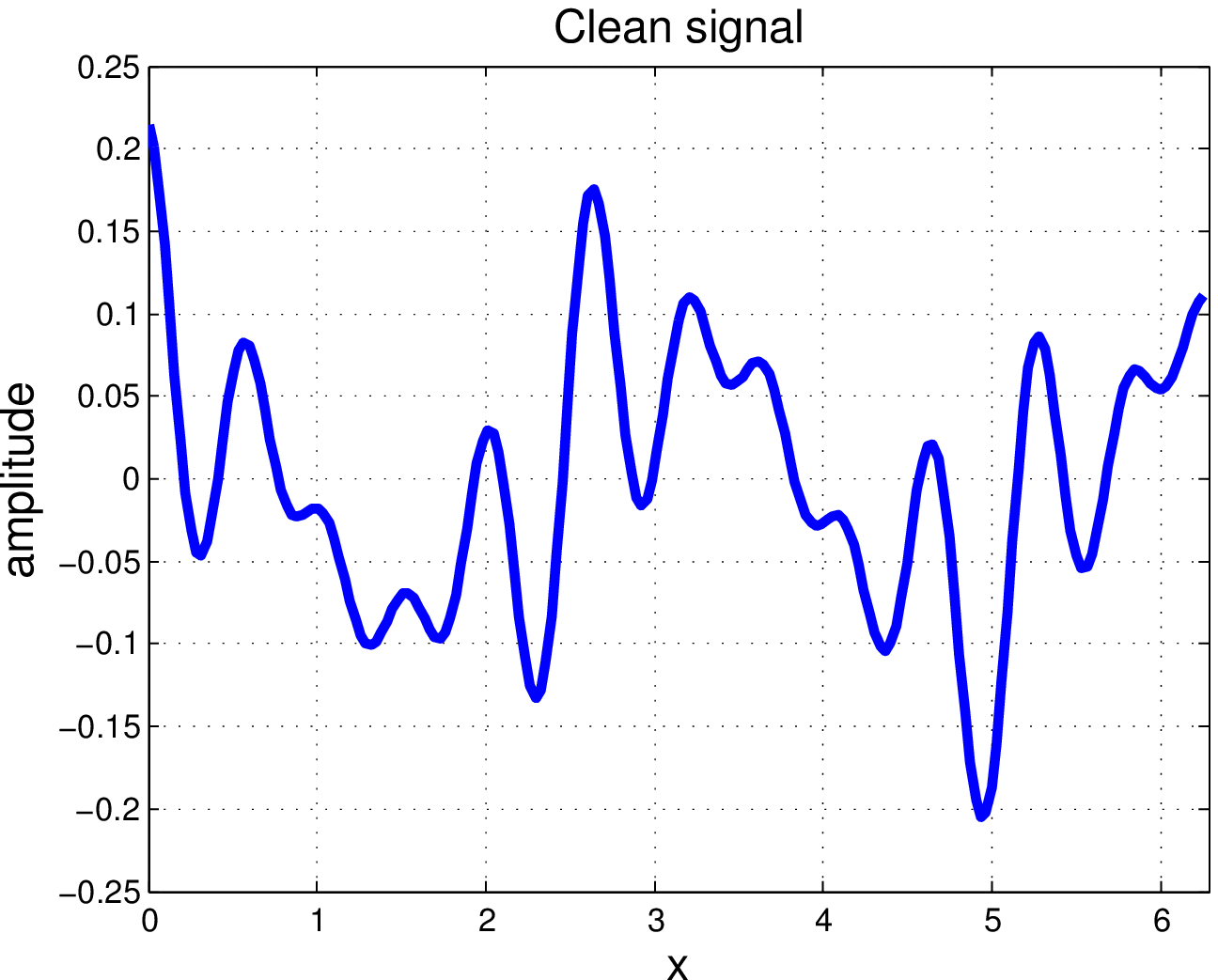}
\includegraphics[width=0.325\textwidth]{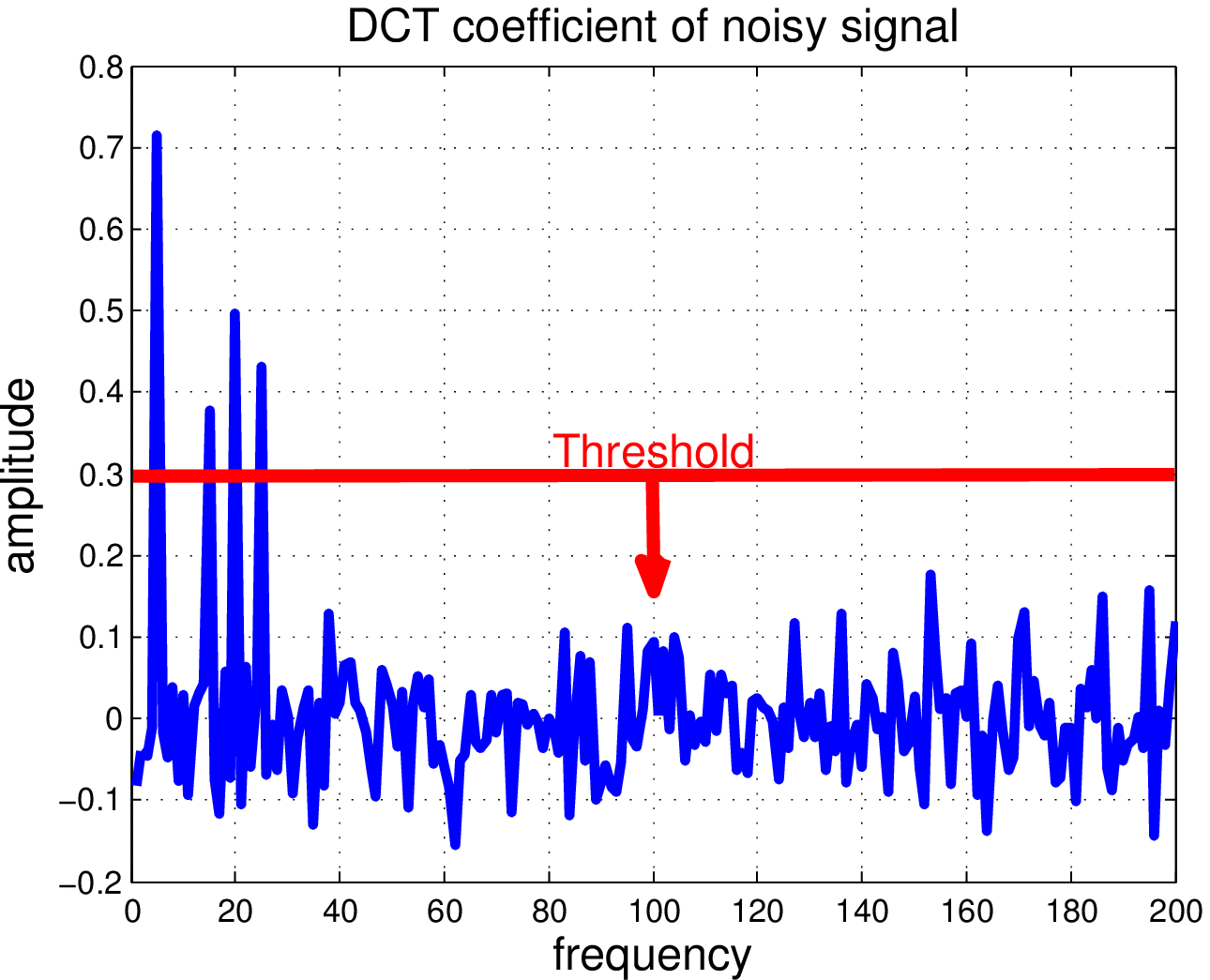}
\includegraphics[width=0.325\textwidth]{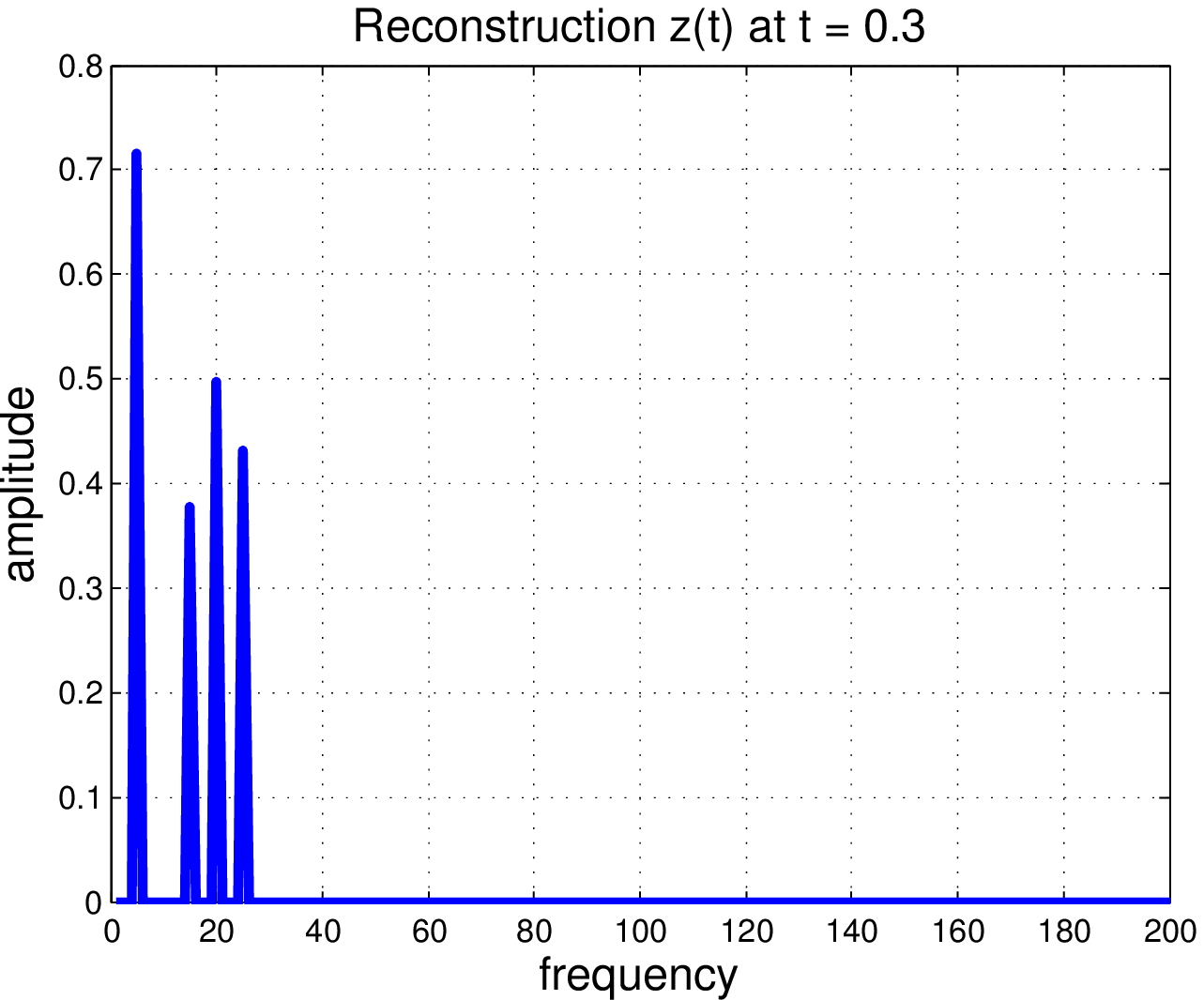}\\
\includegraphics[width=0.325\textwidth]{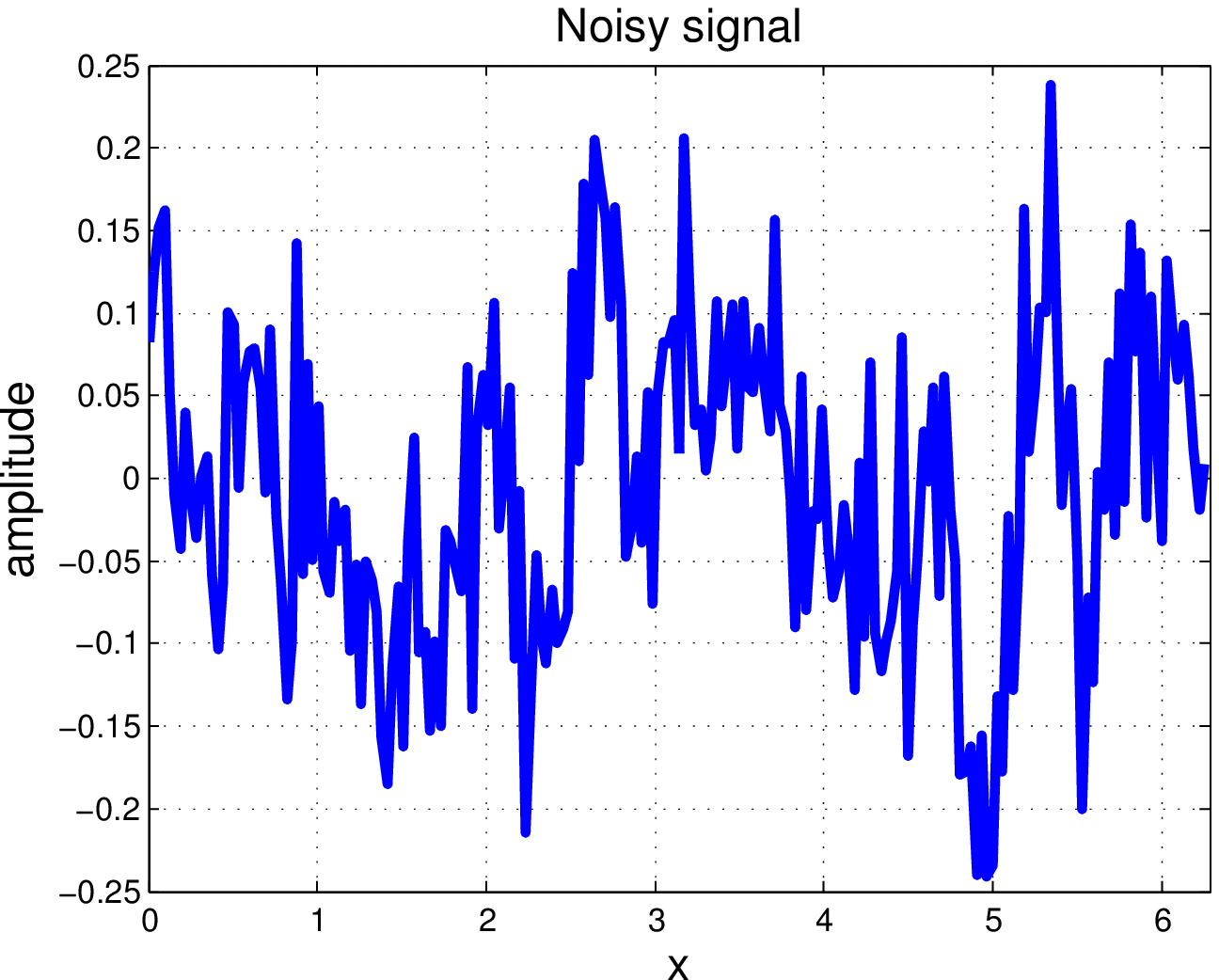}
\includegraphics[width=0.325\textwidth]{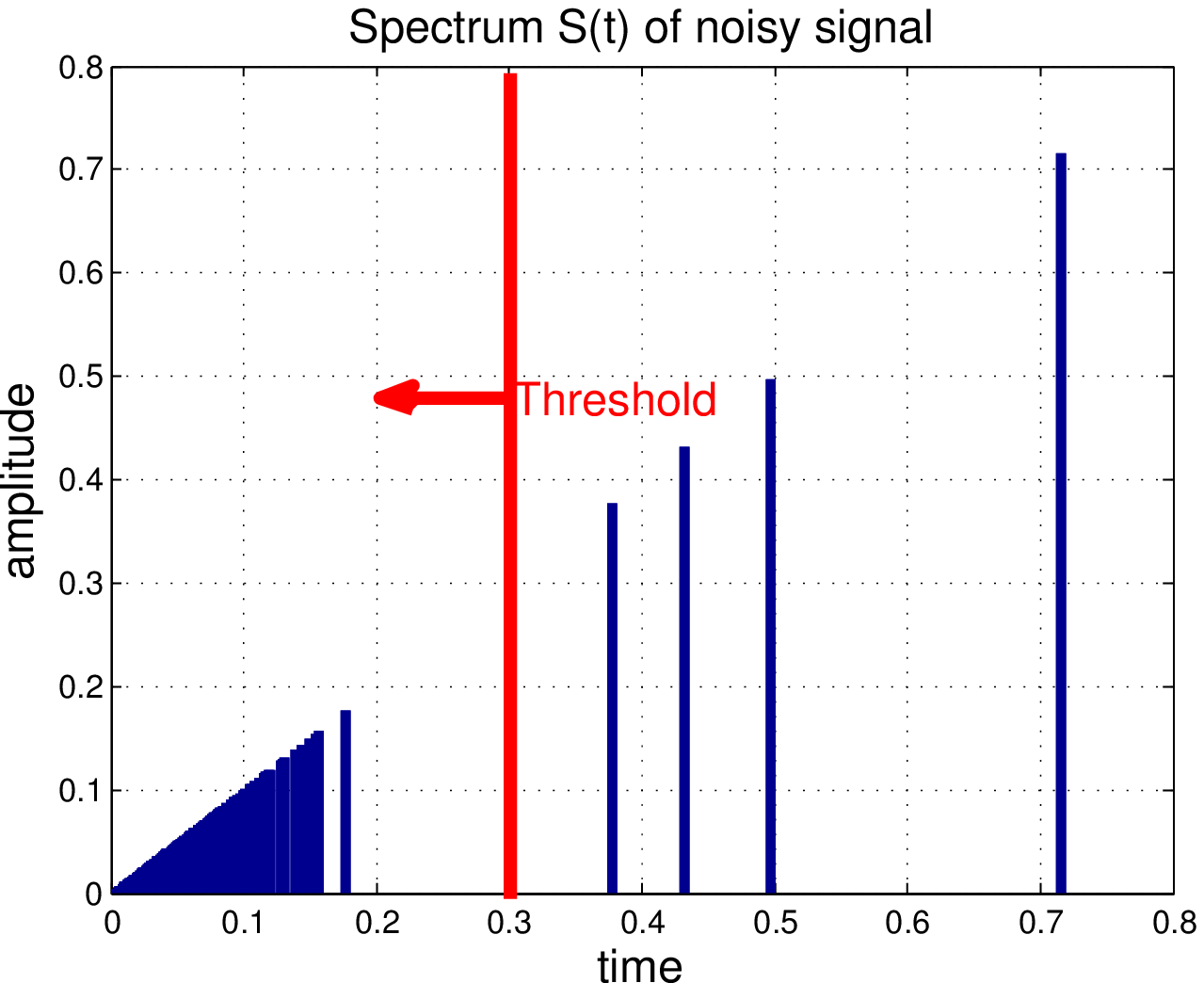}
\includegraphics[width=0.325\textwidth]{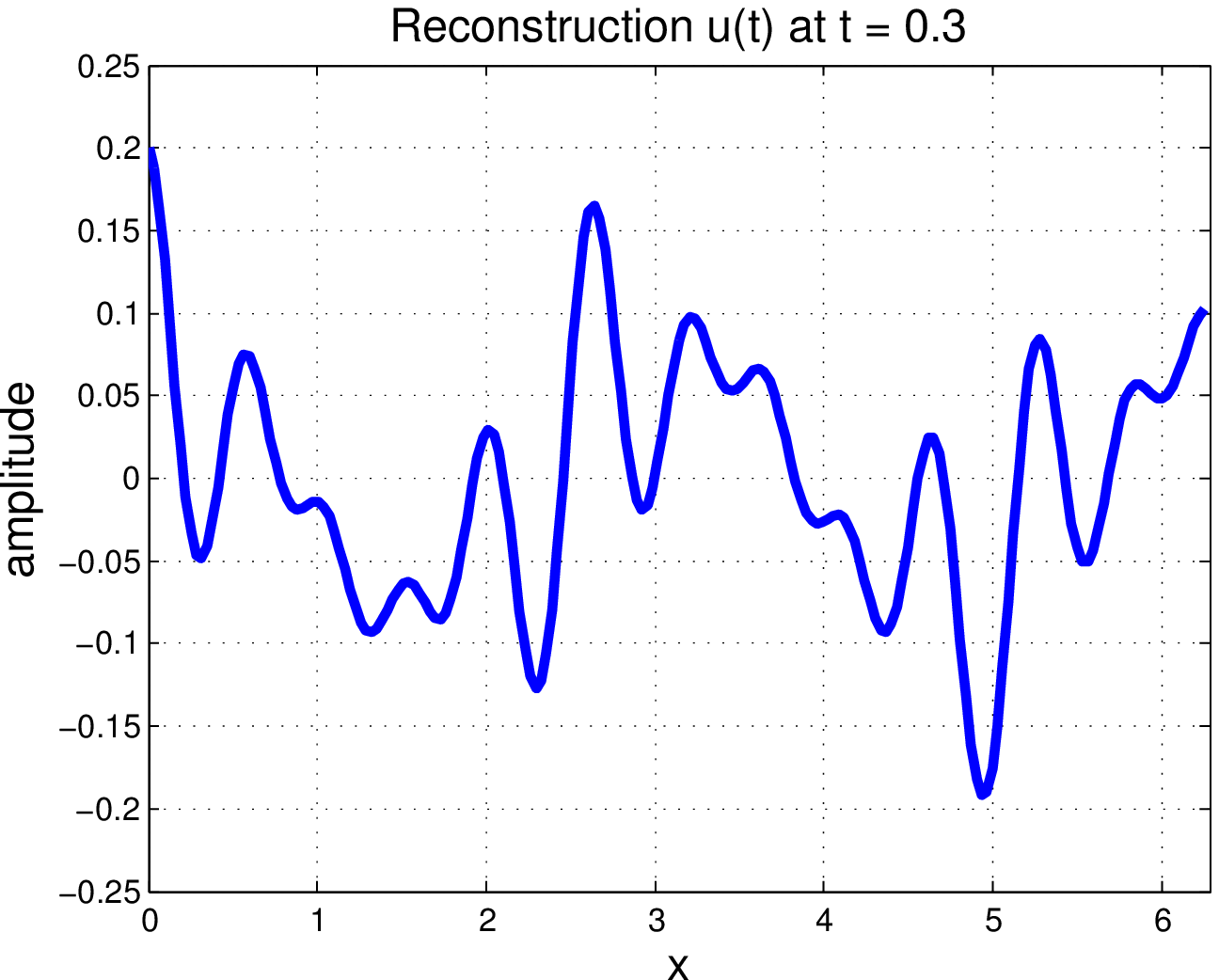}
\end{tabular}
\caption{Recovering classical spectral analysis: The upper left plot shows a clean signal suitable for classical spectral analysis, and below is its noisy version. The upper middle image shows the classical processing/denoising of such data. One determines the DCT coefficients and eliminates contributions of frequencies whose magnitude is below a certain threshold. The lower middle image shows the spectrum $S(t)$ we obtain with the proposed approach with $J(u) = \|Vu\|_1$ for all three variants of the spectral analysis. Due to the obvious separation of four of the peaks from the remaining ones, one could similarly choose to reconstruct the solution at time $t = 0.3$, which is equivalent to the threshold of $0.3$ in the classical sense. The right plots show $z(0.3)$ and $u(0.3)$, i.e. the reconstruction of the ideal filter in the DCT and spatial domains. }
\label{fig:dctExample}
\end{figure}

\vspace*{-24pt}

\subsection{Total Generalized Variation}
Let us now look at a highly nonlinear variational approach and consider the second order total generalized variation (TGV) of \cite{bredies_tgv_2010}, which -- for sufficiently regular $u$ -- can be written as
$ J(u) = \min_{\nabla u = v +z } \beta \|v\|_1 + (1-\beta) \|\nabla z\|_1. $%

Figure \ref{fig:tgvExample} shows an analysis similar to the classical one of Figure \ref{fig:dctExample}, but now for the inverse TGV flow using $\beta = 0.05$. We provide the full movie showing the $u(t)$ throughout an evolution of 1500 time steps in the supplementary material of this paper.
 As we can see, applying an ideal low pass filter leads to almost perfect reconstructions for signals consisting of piecewise linear parts and jumps.

This general behavior can be understood by considering eigenfunctions of the TGV. Examining our previous observations on the favorable properties of the spectral decomposition techniques on eigenfuctions  as well as the observations in \cite{Benning_Burger_2013}, eigenfunctions should reveal the types of signals the regularization 'likes' in the sense that they can be reconstructed easily. Recalling the TGV eigenfunction analysis in \cite{Muller_thesis,Benning_highOrderTV_2013} one can observe that indeed the appearance of the eigenfunction coincides with our numerical observations in the spectral analysis: The original signal $f$ is approximated by sequences of piecewise linear functions with discontinuities.

\begin{figure}[h]
\begin{center}
\includegraphics[width=0.325\textwidth]{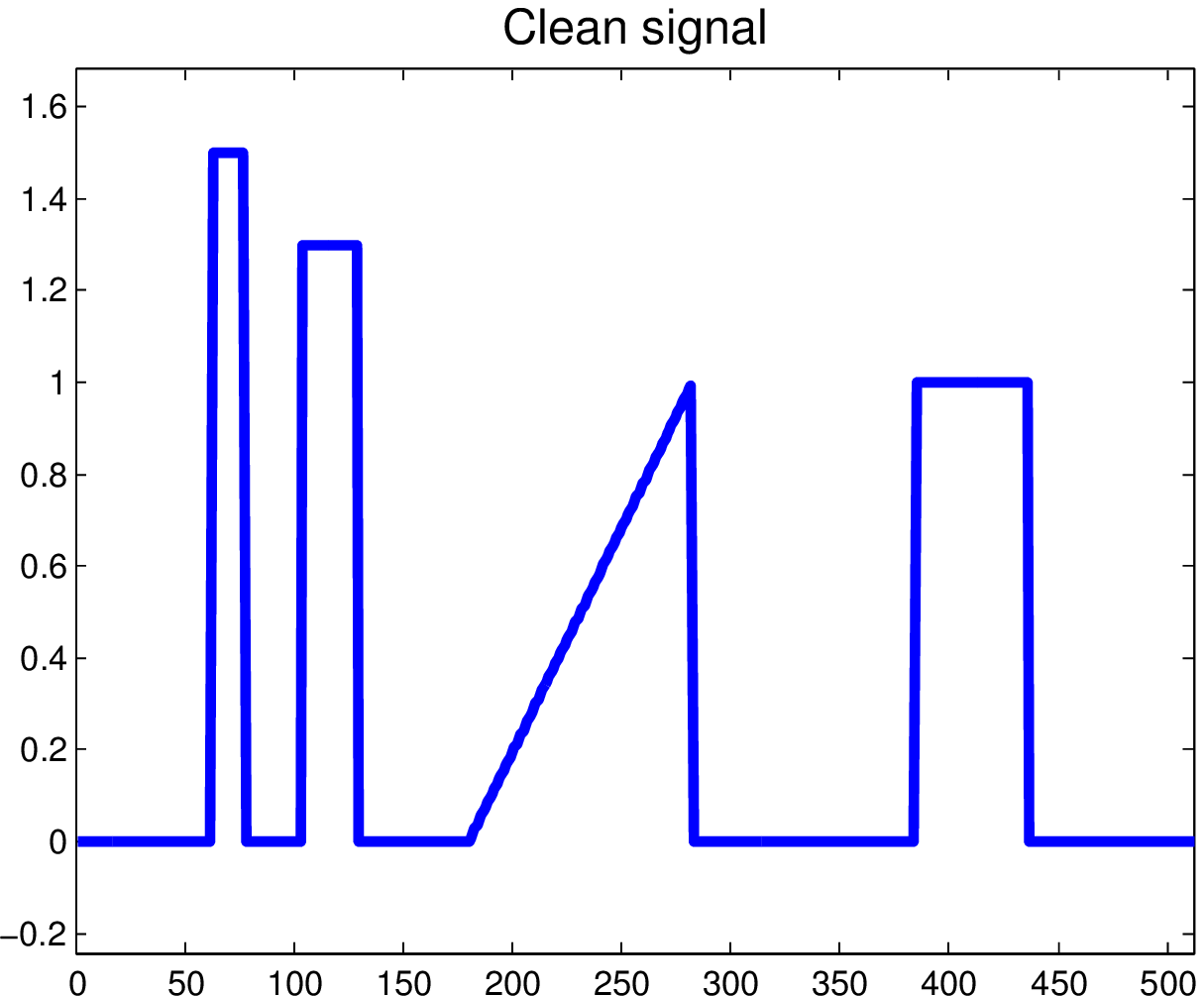}
\includegraphics[width=0.325\textwidth]{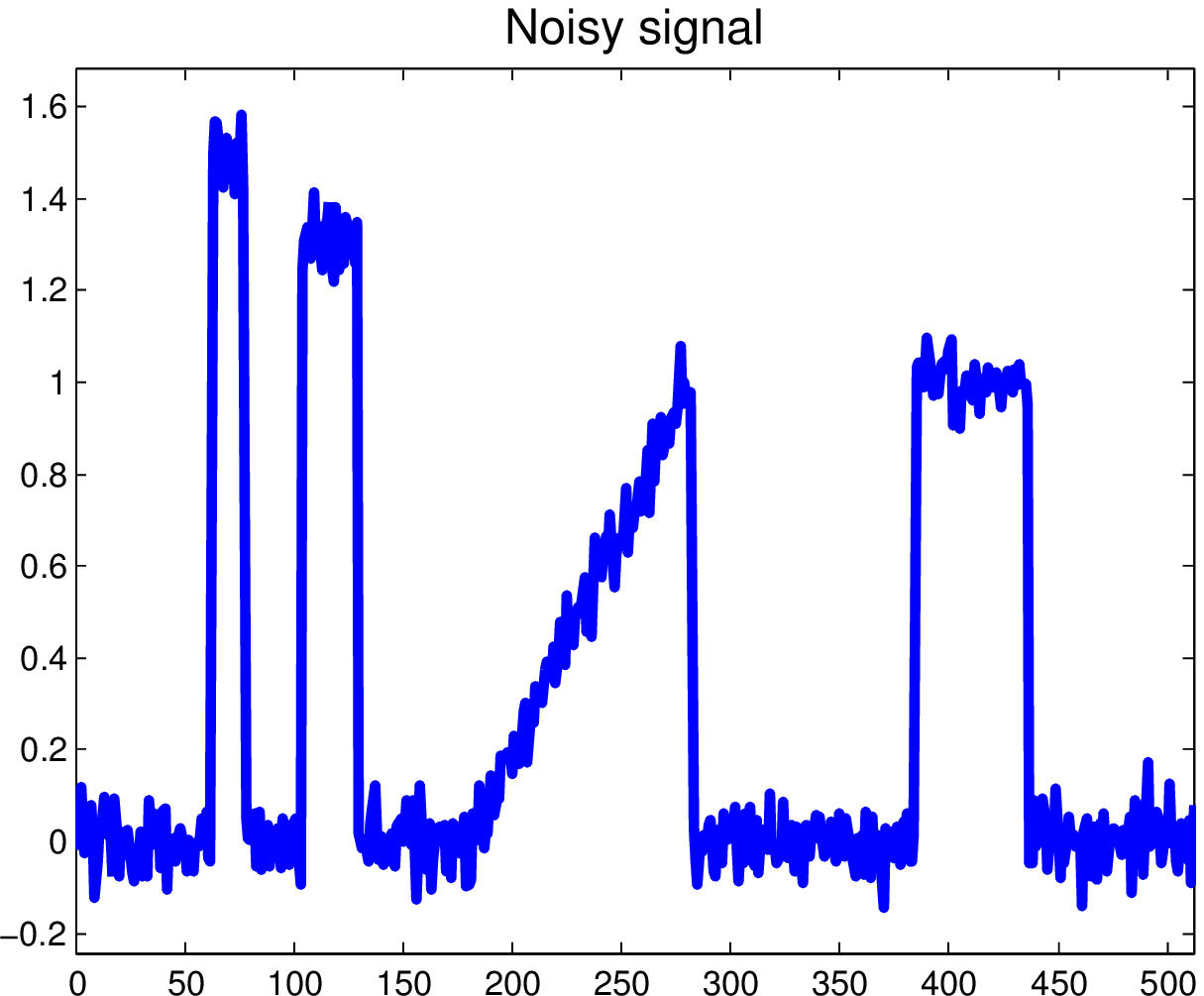}
\includegraphics[width=0.325\textwidth]{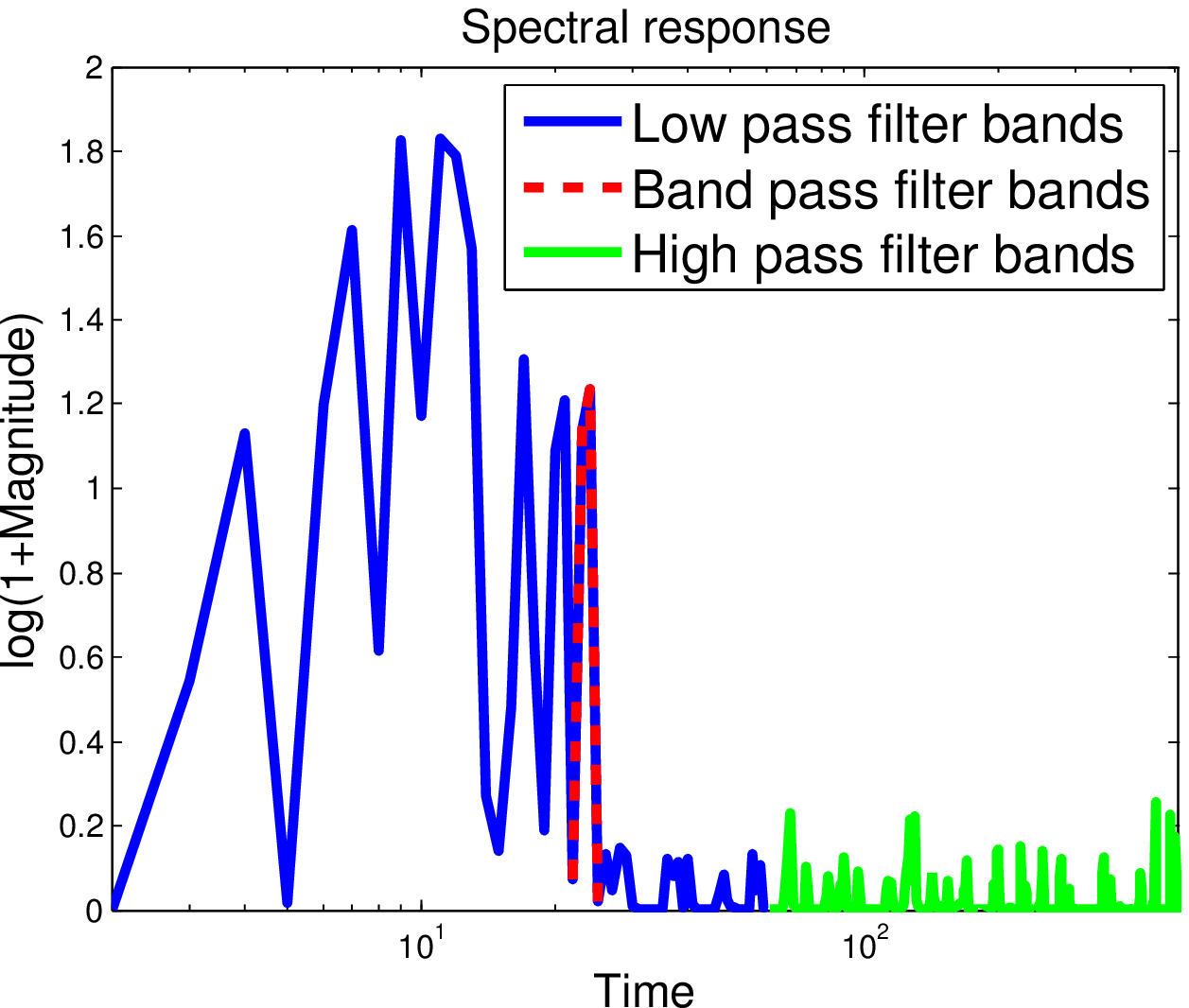} \\
\includegraphics[width=0.325\textwidth]{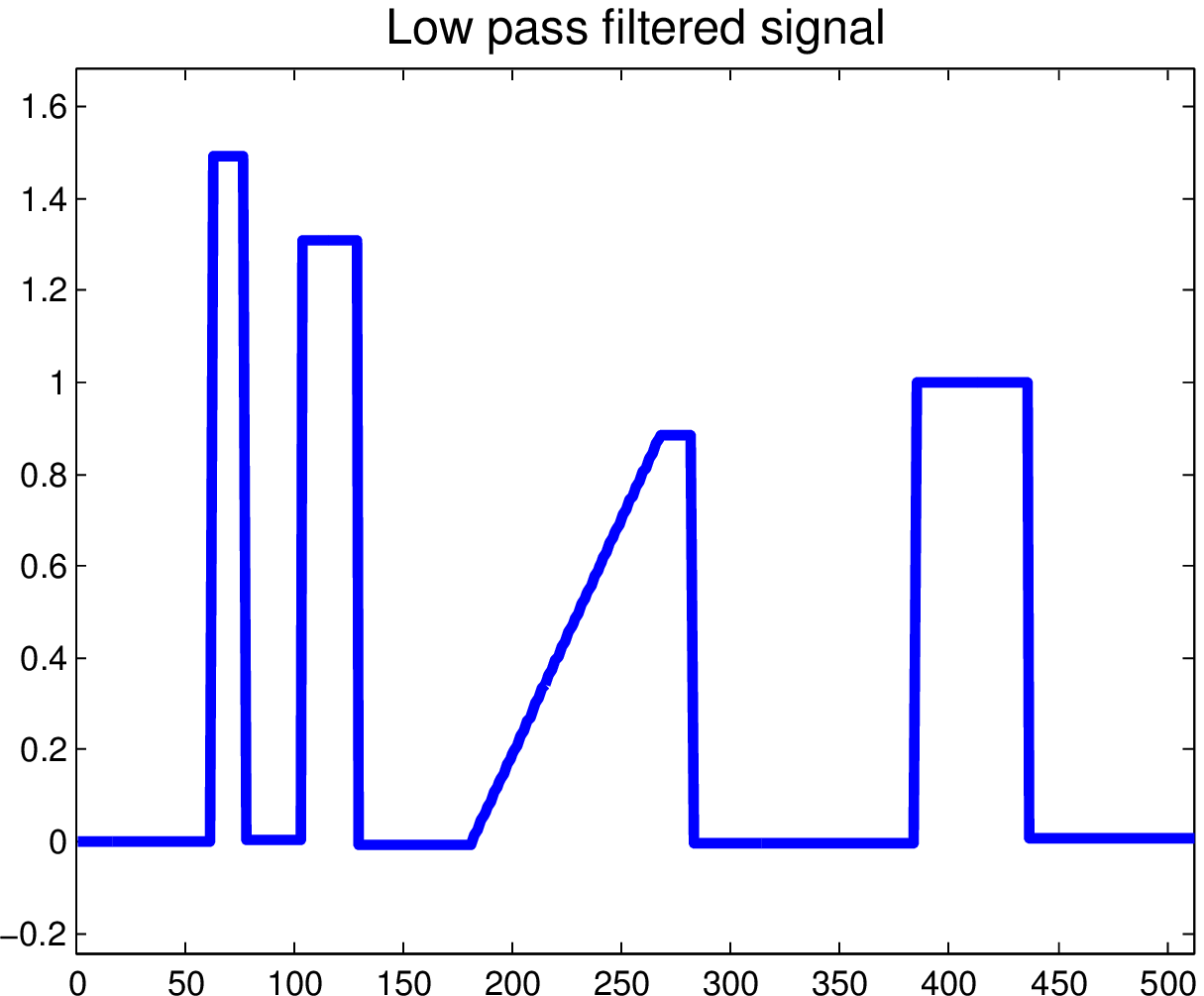}
\includegraphics[width=0.325\textwidth]{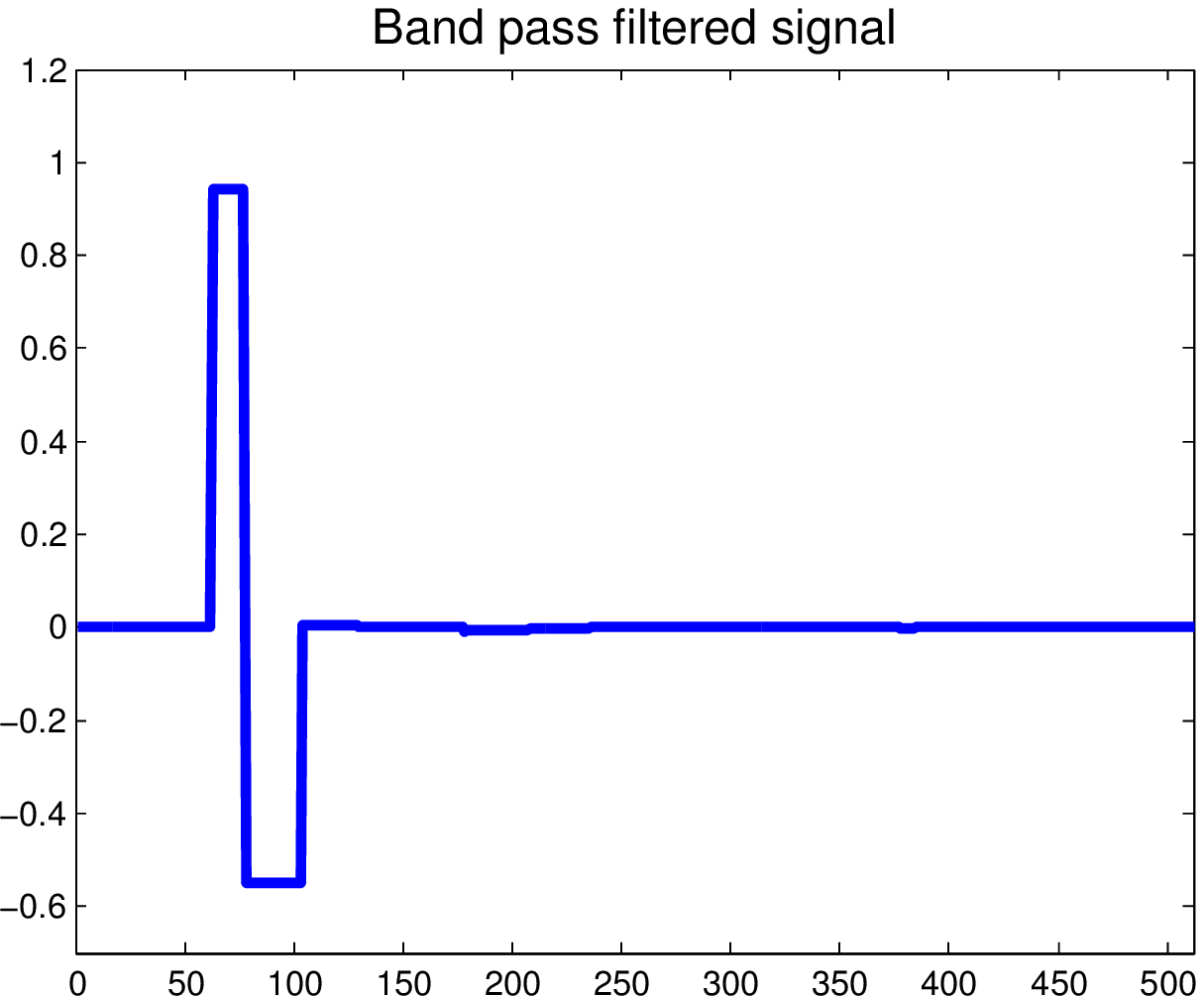}
\includegraphics[width=0.325\textwidth]{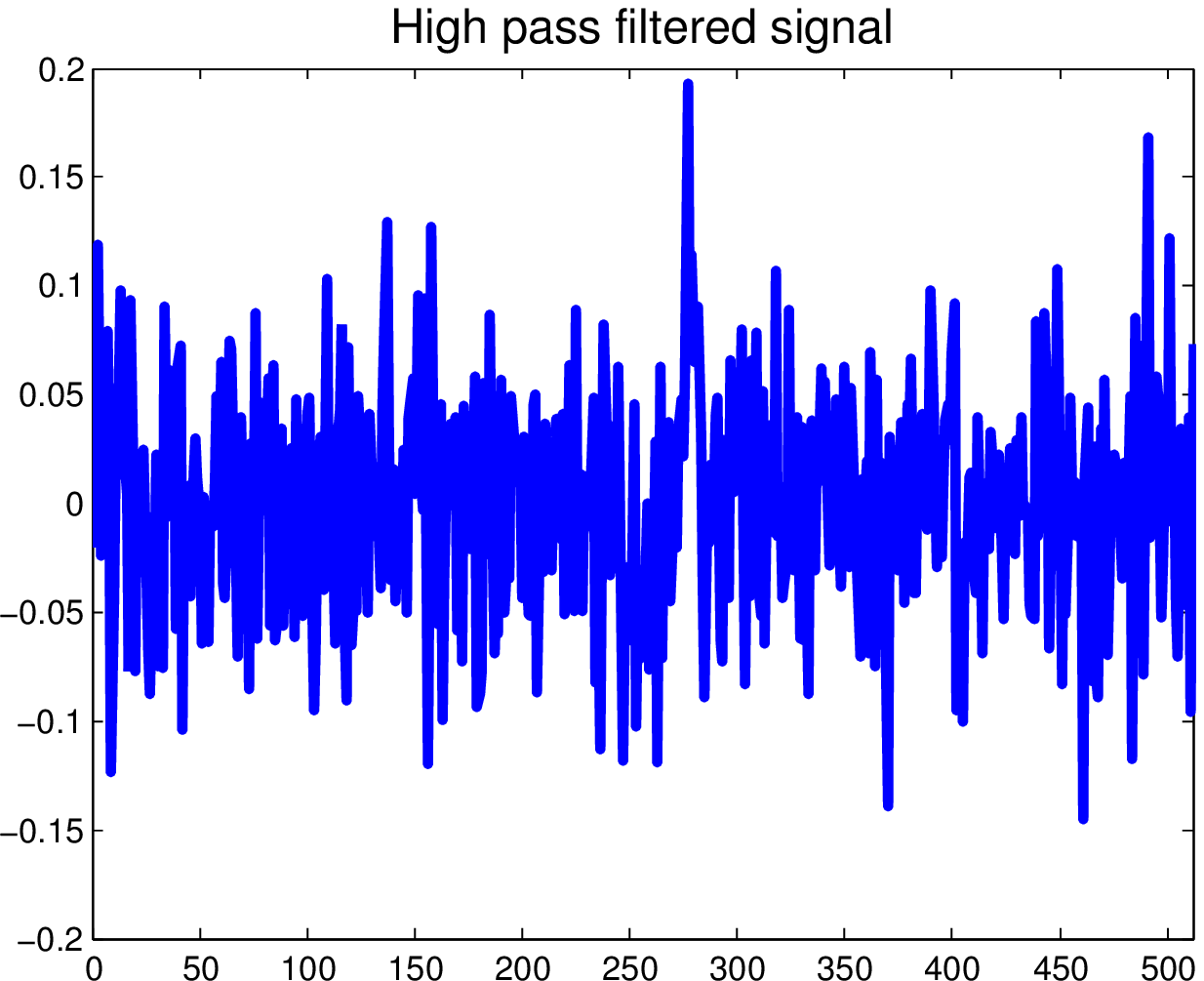}
\caption{Inverse scale space spectral analysis of a signal using TGV regularization. After we added noise to the clean signal (upper left) we obtain the noisy version shown in the upper middle. The spectrum we obtain using TGV analysis with $S(t) = \|\phi(t)\|_1$ is shown in the upper right in a semi logarithmic plot. By low pass, band pass or high pass filtering the spectral decomposition (marked blue, red and green in the spectrum) we obtain the lower left, middle, and right signals respectively. As we can see one can obtain nice reconstructions of the true signal by the low pass filtering. Certain frequencies (like the two peaks of the true signal) are contained in intermediate bands and can be isolated by band pass filtering. The high pass filter contains mostly noise.}
\label{fig:tgvExample}
\end{center}
\end{figure}

\vspace*{-30pt}

\subsection{Coupled Signal Analysis}
As a third example, we'd like to mention that spectral decomposition could also serve as a tool for collaboratively analyzing input data. Just to point out possible applications, Figure \ref{fig:collaboartaionExample} shows two examples: The left part corresponds to an analysis of 15 different input signals $f \in \mathbb{R}^{n \times 15}$ with the collaborative sparse regularization $J(u)=\|u\|_{\infty, 1} = \sum_i \max_j |u_{i,j}|$. Similar to the DCT case, the problem decouples in one direction and yields $\ell^\infty$ regularized subproblems in each component. As we can see in the left part of figure \ref{fig:collaboartaionExample}, an ideal low-pass reconstruction yields exactly those peaks for which all input signals in $f$ had non-zero entries.

Similar concepts could also be derived for collaborative jumps as shown in the right of \ref{fig:collaboartaionExample}, in which we used $J(u) = \|\nabla u\|_{\infty ,1}$. As we can see, a strong low-pass filter leads to the best single joint jump approximation in our example.

\begin{figure}[htb]
\includegraphics[width=0.245\textwidth]{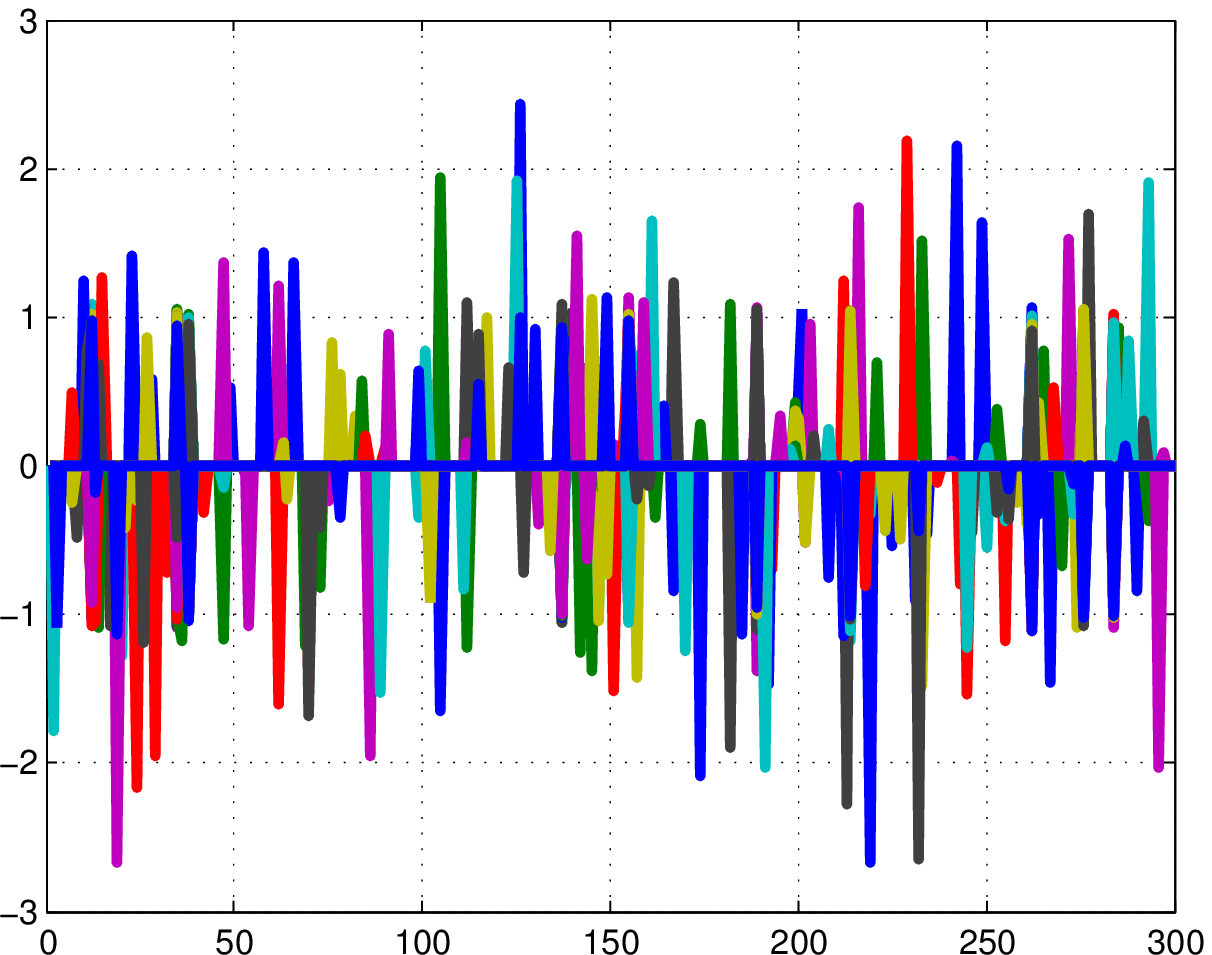}
\includegraphics[width=0.245\textwidth]{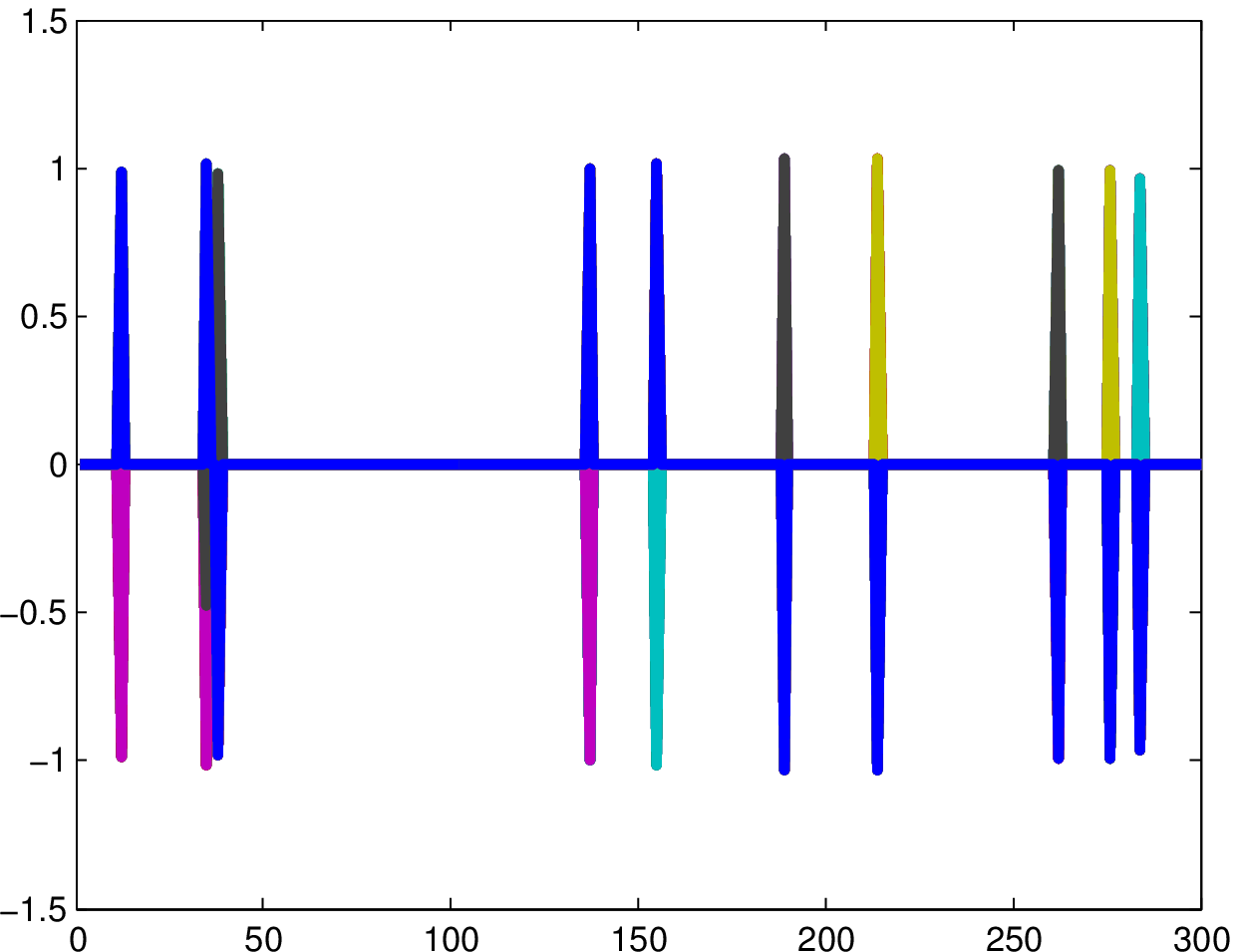}
\includegraphics[width=0.245\textwidth]{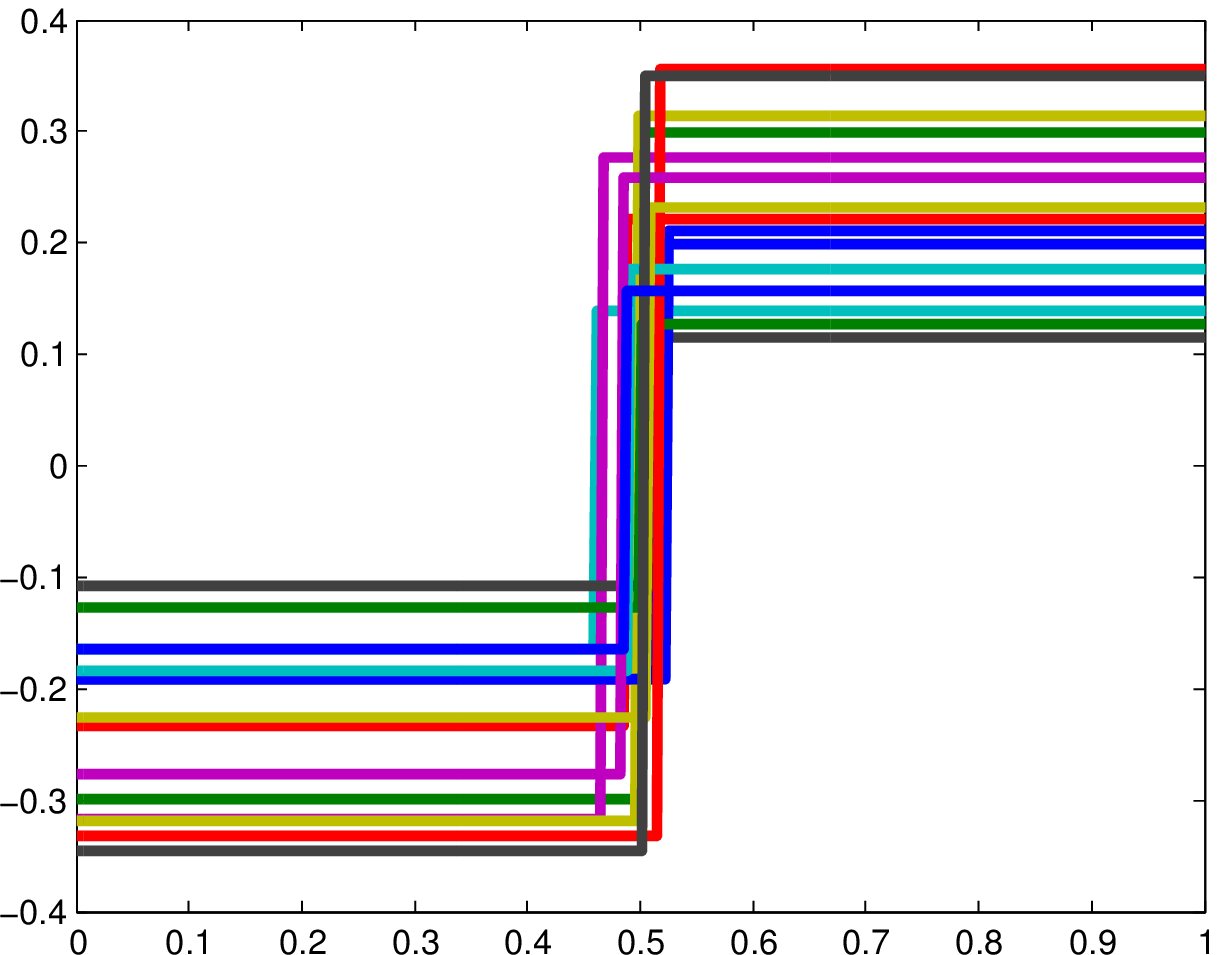}
\includegraphics[width=0.245\textwidth]{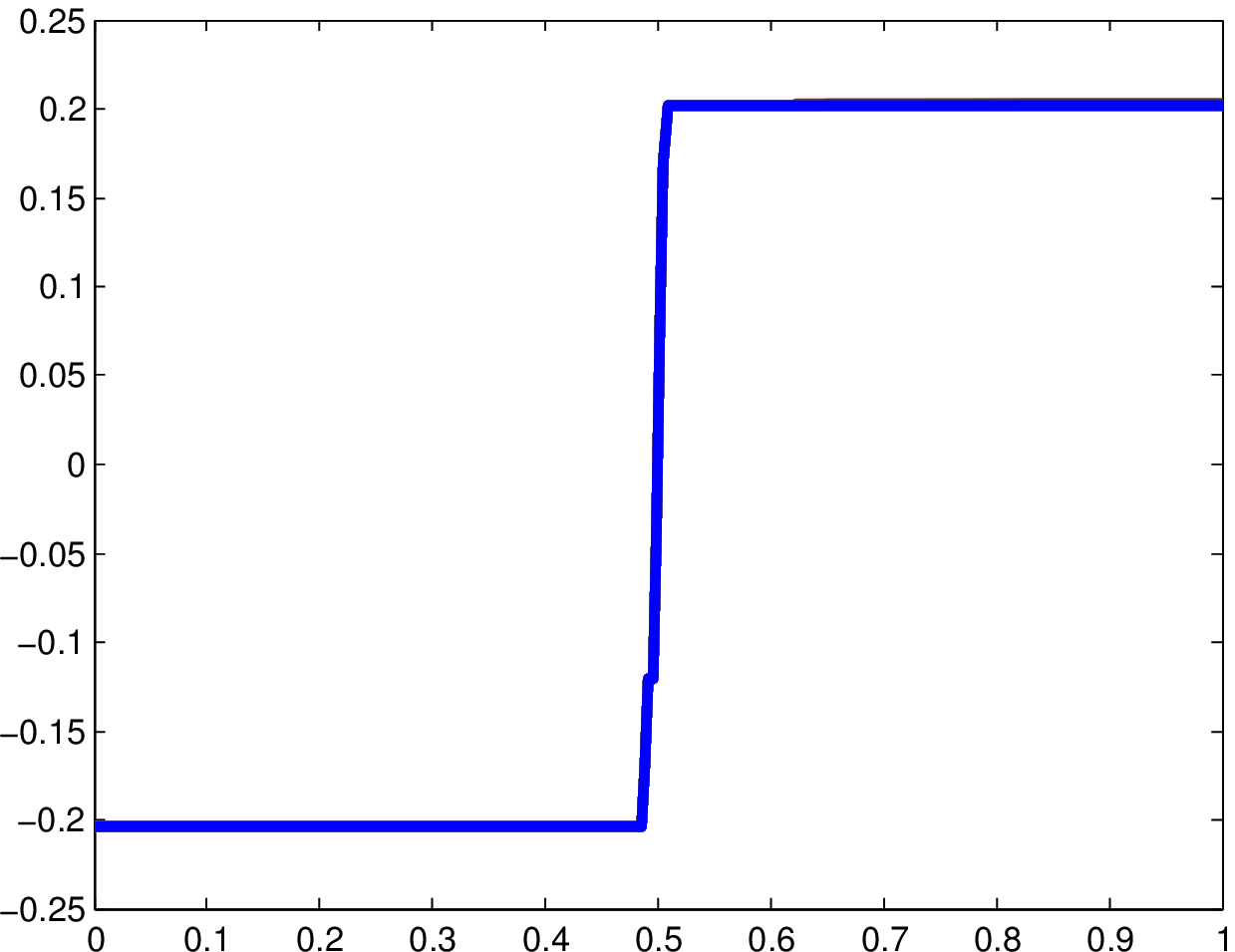}
\caption{Collaborative spectral analysis. Left: Fifteen different sparse input signals that have exactly ten peaks at common positions. Middle left: Ideal low pass filter in the $\|u\|_{\infty,1}$ sense -- as we can see the location of the common nonzero peaks is recovered. Middle right: Fifteen input signals that all have a jump around the position 0.5. Right: Ideal low pass filter in the $\|\nabla u\|_{\infty,1}$ sense -- all fifteen signals are approximated by one function which can be interpreted as the optimal single jump approximation of the input signals and hence naturally reflects the coarsest scale.}
\label{fig:collaboartaionExample}
\end{figure}

\vspace*{-18pt}

\section{Conclusion}
In this paper the theory of nonlinear spectral representation was generalized and extended in several ways.
The framework of \cite{Gilboa_spectv_SIAM_2014} was extended from the total variation functional to general one-homogeneous convex functionals.
Analogue transform representations were formulated based on variational regularization and on inverse-scale-space \cite{iss} (in addition to the original gradient flow formulation).
Moreover, an orthogonality of the decomposition is established and a new spectrum is suggested. Roughly, the spectrum measures the degree of activity of each scale. Here the spectrum is intrinsic to the functional space and can be viewed as a natural nonlinear extension of Parseval's identity.
Three examples of smoothing one-homogeneous functionals, other than TV, illustrate possible benefits of this approach for enhanced image and signal representation.

\vspace*{-6pt}

\section{Acknowledgements}

MB acknowledges support by ERC via Grant EU FP 7 - ERC Consolidator Grant 615216 LifeInverse. 

\bibliographystyle{splncs03}
\bibliography{refs}

\begin{thebibliography}{10}
\providecommand{\url}[1]{\texttt{#1}}
\providecommand{\urlprefix}{URL }

\bibitem{tvFlowAndrea2001}
Andreu, F., Ballester, C., Caselles, V., Maz{\'o}n, J.M.: Minimizing total
  variation flow. Differential and Integral Equations  14(3),  321--360 (2001)

\bibitem{andreu2002some}
Andreu, F., Caselles, V., D{\i}az, J., Maz{\'o}n, J.: Some qualitative
  properties for the total variation flow. Journal of Functional Analysis
  188(2),  516--547 (2002)

\bibitem{agco06}
Aujol, J., Gilboa, G., Chan, T., Osher, S.: Structure-texture image
  decomposition -- modeling, algorithms, and parameter selection. Int. J. Comp.
  Vision  67,  111--136 (2006)

\bibitem{discrete_tvflow_2012}
Bartels, S., Nochetto, R., Abner, J., Salgado, A.: Discrete total variation
  flows without regularization. arXiv preprint arXiv:1212.1137  (2012)

\bibitem{Laplacian_eigenmaps_belkin_niyogi_2003}
Belkin, M., Niyogi, P.: Laplacian eigenmaps for dimensionality reduction and
  data representation. Neural computation  15(6),  1373--1396 (2003)

\bibitem{bellettini2002total}
Bellettini, G., Caselles, V., Novaga, M.: The total variation flow in {$R^N$}.
  Journal of Differential Equations  184(2),  475--525 (2002)

\bibitem{Benning_highOrderTV_2013}
Benning, M., Brune, C., Burger, M., M{\"u}ller, J.: Higher-order tv
  methods:enhancement via bregman iteration. J Sci Comput  54,  269--310 (2013)

\bibitem{Benning_Burger_2013}
Benning, M., Burger, M.: Ground states and singular vectors of convex
  variational regularization methods. Meth. Appl. Analysis  20(4),  295--334
  (2013)

\bibitem{bredies_tgv_2010}
Bredies, K., Kunisch, K., Pock, T.: Total generalized variation. SIAM J.
  Imaging Sciences  3(3),  492--526 (2010)

\bibitem{burger2007inverse_tvflow}
Burger, M., Frick, K., Osher, S., Scherzer, O.: Inverse total variation flow.
  Multiscale Modeling \& Simulation  6(2),  366--395 (2007)

\bibitem{iss}
Burger, M., Gilboa, G., Osher, S., Xu, J.: Nonlinear inverse scale space
  methods. Comm. in Math. Sci.  4(1),  179--212 (2006)

\bibitem{digital_tv}
Chan, T.F., Osher, S., Shen, J.: The digital {TV} filter and nonlinear
  denoising. {IEEE} Trans. Image Process.  10(2),  231--241 (2001)

\bibitem{Eckardt_Burger_Bach_thesis_2014}
Eckardt, L.: Spektralzerlegung von {B}ildern mit {TV}-methoden (2014), bachelor
  thesis, University of M{\"u}nster.

\bibitem{tvf_giga2010}
Giga, Y., Kohn, R.: Scale-invariant extinction time estimates for some singular
  diffusion equations. Hokkaido University Preprint Series in Mathematics (963)
  (2010)

\bibitem{Gilboa_SSVM_2013_SpecTV}
Gilboa, G.: A spectral approach to total variation. In: A. Kuijper et al.
  (Eds.): SSVM 2013. LNCS, vol. 7893, pp. 36--47. Springer (2013)

\bibitem{Gilboa_spectv_SIAM_2014}
Gilboa, G.: A total variation spectral framework for scale and texture
  analysis. SIAM J. Imaging Sciences  7(4),  1937--1961 (2014)

\bibitem{Liu_subspace_LRR_2013}
Liu, G., Lin, Z., Yan, S., Sun, J., Yu, Y., Ma, Y.: Robust recovery of subspace
  structures by low-rank representation. Pattern Analysis and Machine
  Intelligence, IEEE Transactions on  35(1),  171--184 (2013)

\bibitem{Meyer[1]}
Meyer, Y.: Oscillating patterns in image processing and in some nonlinear
  evolution equations (2001)

\bibitem{Muller_thesis}
M{\"u}ller, J.: Advanced image reconstruction and denoising: Bregmanized
  (higher order) total variation and application in pet (2013), {Ph.D.} Thesis,
  Univ. M{\"u}nster.

\bibitem{Ng_Spectral_clustering_2002}
Ng, A., Jordan, M., Weiss, Y.: On spectral clustering: Analysis and an
  algorithm. Advances in neural information processing systems  2,  849--856
  (2002)

\bibitem{Luminita[2]}
Osher, S., Sole, A., Vese, L.: Image decomposition and restoration using total
  variation minimization and the {H}$^{-1}$ norm. SIAM Multiscale Model. Simul.
   1,  349--370 (2003)

\bibitem{rof92}
Rudin, L., Osher, S., Fatemi, E.: Nonlinear total variation based noise removal
  algorithms. Physica D  60,  259--268 (1992)

\bibitem{shi_malik00}
Shi, J., Malik, J.: Normalized cuts and image segmentation. IEEE Transactions
  on Pattern Analysis and Machine Intelligence  22(8),  888--905 (2000)

\bibitem{Steidl}
Steidl, G., Weickert, J., Brox, T., Mr�zek, P., Welk, M.: On the equivalence
  of soft wavelet shrinkage, total variation diffusion, total variation
  regularization, and {SIDE}s. SIAM Journal on Numerical Analysis  42(2),
  686--713 (2004)

\bibitem{XuOsher2007}
Xu, J., Osher, S.: Iterative regularization and nonlinear inverse scale space
  applied to wavelet-based denoising. IEEE Trans. Image Processing  16(2),
  534--544 (2007)

\end{thebibliography}

\end{document}